\theoremstyle{plain}
\newtheorem{theorem}{Theorem}[section]
\newtheorem{conjecture}{Conjecture}[section]
\newtheorem{lemma}[theorem]{Lemma}
\newtheorem{corollary}[theorem]{Corollary}
\newtheorem{proposition}[theorem]{Proposition}
\theoremstyle{remark}
\newtheorem{remark}{Remark}[section]
\newcommand{\Rmnum}[1]{\expandafter\@slowromancap\romannumeral #1@}
\def\ri{{\rm i}}
\def\rd{{\rm d}}
\def\rrw{\rightarrow}
\def\bR{\mathbb R}
\def\bN{\mathbb N}
\def\bZ{\mathbb Z}
\def\bC{{\mathbb C}}
\def\cP{\mathcal{P}}
\def\cD{{\mathcal D}}
\def\rrw{\rightarrow}
\numberwithin{equation}{section}
\begin{document}

\title[Residue class biases in partitions]{Residue
class biases in unrestricted partitions, partitions into 
distinct parts, and overpartitions}
\author{Michael J.\ Schlosser and Nian Hong Zhou}

\address{Michael J.\ Schlosser:
Fakult\"{a}t f\"{u}r Mathematik, Universit\"{a}t Wien\\
Oskar-Morgenstern-Platz 1, A-1090, Vienna, Austria}%
\email{michael.schlosser@univie.ac.at}%

\address{Nian Hong Zhou: School of Mathematics and Statistics,
The Center for Applied Mathematics of Guangxi,
Guangxi Normal University, Guilin 541004, Guangxi, PR China}
\email{nianhongzhou@outlook.com; nianhongzhou@gxnu.edu.cn}%

\thanks{The first author was partially supported by
Austrian Science Fund FWF
\href{https://doi.org/10.55776/P32305}{10.55776/P32305}.
The second author was partially supported by
Austrian Science Fund FWF
\href{https://doi.org/10.55776/P32305}{10.55776/P32305}
and the
 National Natural Science Foundation of China
 (No.\ 12301423).}%
\subjclass{Primary 05A17; Secondary 11P81}%
\keywords{residue class biases, unrestricted partitions,
distinct partitions, overpartitions, asymptotics}%

\begin{abstract}
We prove specific biases in the number of occurrences of parts
belonging to two different residue classes $a$ and $b$, modulo a
fixed non-negative integer $m$, for the sets of unrestricted partitions,
partitions into distinct parts, and overpartitions. These biases follow
from inequalities for residue-weighted partition functions for the
respective sets of partitions.
We also establish asymptotic formulas for the numbers of partitions of
size $n$ that belong to these sets of partitions and have a symmetric
residue class bias (i.e., for $1\le a<m/2$ and  $b=m-a$),
as $n$ tends to infinity.
\end{abstract}
\maketitle

\section{Introduction}
Let $\bN$ and $\bN_0$ denote the sets of positive and non-negative
integers, respectively.
Throughout this paper, we assume $q$ to be a fixed
complex number satisfying $0<|q|<1$.
For any indeterminant $a$ and complex number $c$,
let the \textit{$q$-shifted factorial} (cf.\ \cite{MR2128719})
be defined by
$$
(a;q)_\infty:=\prod_{j\ge 0}(1-aq^j),\quad\text{and}\quad
(a;q)_c:=\frac{(a;q)_\infty}{(aq^c;q)_\infty}.
$$
Products of $q$-shifted factorials are compactly denoted as
$$
(a_1,\ldots, a_m;q)_c:=\prod_{1\le j\le m}(a_j;q)_c
$$
for $m\in\bN$ and $c\in \bC\cup\{\infty\}$.

A partition $\lambda$ of a positive integer $n$ (cf.\ \cite{MR557013})
is a nonincreasing sequence of positive integers (called the parts),
$\lambda=(\lambda_1, \lambda_2,\ldots,\lambda_l)$, such that
$\lambda_1+\lambda_2+\cdots+\lambda_l=n$. The integer
$l=:\ell(\lambda)$ is called the number of parts of $\lambda$.
If $\lambda$ is a partition of $n$ then we write $|\lambda|=n$.
We also define that $\ell(\varnothing)=|\varnothing| = 0$ for the empty
partition, $\varnothing$, of $0$. Let $\cP(n)$ and $\cD(n)$ denote
the sets of all partitions and all partitions with distinct parts (which we
shall call ``distinct partitions'' for short), of $n$, respectively.
Let $\cP=\bigcup_{n\ge 0}\cP(n)$ and $\cD=\bigcup_{n\ge 0}\cD(n)$
denote the sets of all partitions and distinct partitions, respectively.
Let $a,b, m\in\bN$ with $1\le a<b\le m$. We define
$\ell_{a,m}(\lambda)$ to be the number of parts of $\lambda$ congruent
to $a$ modulo $m$, and define
$\ell_{a,m}(\lambda,\mu):=\ell_{a,m}(\lambda)+\ell_{a,m}(\mu)$
for each pair $(\lambda, \mu)\in \cP\times\cD$.

In this paper, we consider the following weighted partition function,
defined as
\begin{equation}
p_n(x,y)=\sum_{\substack{|\lambda|+|\mu|=
n\\(\lambda, \mu)\in \cP\times\cD}}x^{\ell(\lambda)}y^{\ell(\mu)}.
\end{equation}
where $x$ and $y$ are non-negative real numbers, and $n\in\bN_0$.
It is clear that
$p(n)=p_n(1,0)$, $q(n)=p_n(0,1)$ and $\overline{p}(n)=p_n(1,1)$
are the unrestricted (or ordinary) partition function, distinct partition
function, and overpartition function, respectively.
(Overpartitions were first explicitly considered by Brenti
in \cite[Section~3]{MR1212626} who called them ``dotted partitions''.
A thorough independent study of these objects, involving bijections,
generating functions and connections to Bailey chains, was initiated
by Corteel and Lovejoy~\cite{MR2034322}
who gave them the name ``overpartitions'', which we follow.)
We will focus on the residue-weighted biases partition functions
which we define by
\begin{equation}\label{eqm}
p_{n}(a,b,m; x,y):=\sum_{\substack{|\lambda|+|\mu|
=n\\(\lambda, \mu)\in\cP\times \cD\\
\ell_{a,m}(\lambda,\mu)> \ell_{b,m}(\lambda,\mu)}}
x^{\ell(\lambda)}y^{\ell(\mu)}.
\end{equation}
From standard arguments of partition theory (cf.\ \cite{MR557013}),
we have the following generating function for the statistics $\ell(\lambda)$
on $\cP$, $\ell(\mu)$ on $\cD$,
$\ell_{a,m}(\lambda,\mu), \ell_{b,m}(\lambda,\mu)$ and
$|\lambda|+|\mu|$ on $\cP\times \cD$:
\begin{equation}\label{eqm10}
\sum_{(\lambda, \mu)\in\cP\times \cD}
u^{\ell_{a,m}(\lambda,\mu)}v^{\ell_{b,m}(\lambda,\mu)}
x^{\ell(\lambda)}y^{\ell(\mu)}q^{|\lambda|+|\mu|}=
\frac{(-yq;q)_\infty( -uyq^a,-vyq^b,xq^a,xq^b;q^m)_\infty}
{(xq;q)_\infty(-yq^a,-yq^b,uxq^a,vxq^b;q^m)_\infty}.
\end{equation}

Special cases of \eqref{eqm10} are of interest.
The case $(x,y)=(1,0)$, that is
\begin{equation*}
\sum_{\lambda\in\cP}u^{\ell_{a,m}(\lambda)}
v^{\ell_{b,m}(\lambda)}q^{|\lambda|}=
\frac{(q^a,q^b;q^m)_\infty}{(q;q)_\infty(uq^a,vq^b;q^m)_\infty},
\end{equation*}
was studied by Chern who in
\cite[Theorem~1.3]{MR4372179} proved that
$$p_{n}(a,b,m; 1,0)\ge p_{n}(b,a,m; 1,0)$$
holds for all $n\ge 0$ and $1\le a<b\le m$. Already earlier,
Kim--Kim--Lovejoy \cite{MR4107768} proved a phenomenon of parity bias
for integer partitions, namely: the number of
partitions of $n$ ($n>0$, $n\neq 2$) with more odd parts than even parts
is greater than the number of partitions of $n$ ($n>0$, $n\neq 2$) with
more even parts than odd parts. In our notation, they proved
\begin{equation}\label{eqkkl}
p_{n}(1,2,2; 1,0)\ge p_{n}(2,1,2; 1,0)
\end{equation}
for all $n\ge 0$, where the inequality is strict except for $n\in \{0, 2\}$.
Similar phenomena were shown by Kim--Kim \cite{MR4308132} and include
the following inequalities:
$$p_{n}(1,m,m; 1,0)\ge p_{n}(m,1,m; 1,0)\quad\text{and}\quad
p_{n}(1,m-1,m; 1,0)\ge p_{n}(m-1,1,m; 1,0),$$
for all integers $m\ge 2$ and $n\ge 0$.

While the inequality \eqref{eqkkl} of partitions holds for all $n\ge 0$,
the same inequality holds only from a certain integer on
when restricting to the set of distinct partitions.
In fact, Kim--Kim--Lovejoy \cite{MR4107768} conjectured
(in our notation) that
$$p_{n}(1,2,2; 0,1)> p_{n}(2,1,2; 0,1)$$
for all $n>19$ (only); this has been proved by
Banerjee--Bhattacharjee--Dastidar--Mahanta--Saikia  \cite{MR4396558}
by using a combinatorial approach.

In this paper, we are interested in the biases of $p_{n}(a,b,m; x,y)$
in terms of different residue classes $a$ and $b$ of a fixed modulus $m$.
Our main results are the following three Theorems~\ref{mth1}--\ref{th2}.
\begin{theorem}\label{mth1}Let $a,b,m$ be any integers such that
$1\le a<b\le m$. For any $x\ge 1$, $y\ge 0$ and $n\ge 0$, we have
$$p_{n}(a,b,m; x,y)\ge p_{n}(b,a,m; x,y).$$
\end{theorem}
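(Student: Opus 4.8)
The plan is to convert the inequality into a statement about non-negativity of power-series coefficients (write $f\succeq 0$ if $f$ is a power series in $q$ all of whose coefficients are non-negative), starting from the master generating function \eqref{eqm10}, and then to display the relevant series as a product of factors each of which is $\succeq 0$ when $x\ge 1$ and $y\ge 0$; a case distinction according to whether or not $b=2a$ will be needed.

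\emph{Step 1: reduction to a tail sum.} First I would observe that the right-hand side of \eqref{eqm10} factors as $G(q)\,P_a(u)\,P_b(v)$, where
$$G(q)=\frac{(-yq;q)_\infty\,(xq^a,xq^b;q^m)_\infty}{(xq;q)_\infty\,(-yq^a,-yq^b;q^m)_\infty},\qquad
P_c(t)=\frac{(-tyq^c;q^m)_\infty}{(txq^c;q^m)_\infty}.$$
Splitting the products $(xq;q)_\infty$ and $(-yq;q)_\infty$ according to residues modulo $m$ rewrites $G(q)=\prod_{1\le r\le m,\ r\ne a,b}\frac{(-yq^r;q^m)_\infty}{(xq^r;q^m)_\infty}$, so $G(q)\succeq 0$ whenever $x,y\ge 0$. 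By the $q$-binomial theorem, $P_c(t)=\sum_{j\ge 0}A_j\,(tq^c)^j$ with
$$A_j=A_j(q^m;x,y)=\frac{\prod_{i=0}^{j-1}(x+yq^{mi})}{(q^m;q^m)_j},$$
the coefficients being \emph{independent of $c$}; hence $A_j\succeq 0$ for $x,y\ge 0$, and since $A_{j+1}-A_j=A_j\cdot\big((x-1)+yq^{mj}+q^{m(j+1)}\big)/(1-q^{m(j+1)})$, the sequence $(A_j)$ is coefficientwise non-decreasing once $x\ge 1$. Extracting the coefficient of $u^jv^k$ gives $\sum_n p_n(a,b,m;x,y)q^n=G(q)\sum_{j>k}A_jA_kq^{aj+bk}$, and the same with $a,b$ interchanged for $p_n(b,a,m;x,y)$; subtracting and writing $d=j-k$ yields
$$\sum_{n\ge 0}\big(p_n(a,b,m;x,y)-p_n(b,a,m;x,y)\big)q^n=G(q)\sum_{d\ge 1}(q^{ad}-q^{bd})\,c_d,\qquad c_d:=\sum_{k\ge 0}A_{k+d}A_k\,q^{(a+b)k}.$$
Here each $c_d\succeq 0$, and $(c_d)$ is coefficientwise non-decreasing, for $x\ge 1$, $y\ge 0$.

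\emph{Step 2: positivity of $\sum_{d\ge 1}(q^{ad}-q^{bd})c_d$.} This is where $x\ge 1$ is used in an essential way — the analogous statement is false for the set of distinct partitions, i.e.\ at $x=0$. If $b\ne 2a$, I would put $r:=b-a$ and note that $1\le r\le m-1$, $r\ne a$ and $r\ne b$, so $G(q)$ carries the factor $1/(xq^r;q^m)_\infty$. Using $q^{ad}-q^{bd}=q^{ad}(1-q^r)(1+q^r+\dots+q^{(d-1)r})$, pulling that factor out of $G(q)$, and invoking
$$\frac{1-q^r}{(xq^r;q^m)_\infty}=\frac{1-q^r}{1-xq^r}\cdot\frac1{(xq^{r+m};q^m)_\infty},\qquad
\frac{1-q^r}{1-xq^r}=1+\sum_{i\ge 1}x^{i-1}(x-1)q^{ri}\succeq 0\ \ (x\ge 1),$$
one rewrites the left-hand side of the displayed identity above as a product of series each $\succeq 0$, which proves the theorem in this case. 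If instead $b=2a$, I would reindex the subtracted sum by $e=2d$ to get
$$\sum_{d\ge 1}(q^{ad}-q^{2ad})c_d=\sum_{\substack{d\ge 1\\ d\ \mathrm{odd}}}c_d\,q^{ad}+\sum_{\substack{d\ge 2\\ d\ \mathrm{even}}}(c_d-c_{d/2})\,q^{ad},$$
which is $\succeq 0$ because $(c_d)$ is non-decreasing; multiplying by $G(q)\succeq 0$ finishes this case as well. (Specialising $(x,y)$ to $(1,0)$ and to $(1,1)$ then recovers the biases for unrestricted partitions and for overpartitions.)

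The bookkeeping in Step 1 and the elementary coefficient estimates are routine. The main obstacle is conceptual: neither $G(q)$ nor the double sum $\sum_{j>k}A_jA_k(q^{aj+bk}-q^{ak+bj})$ is by itself $\succeq 0$ — the double sum genuinely picks up negative coefficients in general — so one has to recognise the right way to redistribute a factor between them, namely to absorb a $(1-q^{b-a})$ from the sum into a $q$-shifted factorial coming from $G(q)$. Spotting that this manoeuvre breaks down exactly when $b-a=a$, i.e.\ $b=2a$ (the factor $1/(xq^{b-a};q^m)_\infty$ is then no longer present in $G(q)$), and that this residual case can instead be dispatched by the dilation argument above exploiting the monotonicity of $(c_d)$, is the crux.
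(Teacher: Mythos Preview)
Your proof is correct. Step~1 and the generic half of Step~2 match the paper's argument closely: the paper derives the same difference generating function (its Lemma~2.2) and likewise absorbs the factor $1-q^{b-a}$ into one of the denominators $(xq^j;q^m)_\infty$ of $G(q)$ (its Proposition~2.3). The paper happens to absorb into $j=1$ when $a>1$ and into $j=b-1$ when $a=1$, so its exceptional case is only $(a,b)=(1,2)$, whereas your choice $j=b-a$ leaves the larger exceptional family $b=2a$. The genuine difference is how the exceptional case is handled: the paper proves it via a Heine/Fine transformation and an inductive overpartition analogue of a theorem of Andrews (its Theorems~2.4 and~2.6), which are heavier but yield results of independent interest (including a conjecture of Chern as a corollary). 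Your dilation trick---rewriting $\sum_{d\ge 1}(q^{ad}-q^{2ad})c_d$ as $\sum_{d\ \text{odd}}c_dq^{ad}+\sum_{d\ \text{even}}(c_d-c_{d/2})q^{ad}$ and invoking the coefficientwise monotonicity $A_{j+1}\succeq A_j$ (hence $c_{d+1}\succeq c_d$) that holds for $x\ge 1$---is more elementary and settles Theorem~1.1 without that extra machinery.
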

We point out that the case $(x,y)=(1,1)$ of Theorem~\ref{mth1},
that is the case related to the generating function
\begin{equation*}
\sum_{(\lambda, \mu)\in\cP\times \cD}
u^{\ell_{a,m}(\lambda,\mu)}v^{\ell_{b,m}(\lambda,\mu)}
q^{|\lambda|+|\mu|}=
\frac{(-q;q)_\infty(-uq^a,-vq^b,q^a,q^b;q^m)_\infty}
{(q;q)_\infty(-q^a,-q^b,uq^a,vq^b;q^m)_\infty},
\end{equation*}
is the overpartitions analogue of the aforementioned result by
Chern \cite{MR4372179}. We also establish some new results for the bias
of distinct partitions, belonging to the case $(x,y)=(0,1)$.
In particular, for any $x\ge 0$ and $y=1$, we have the following theorem.
\begin{theorem}\label{mth2}Let $x\ge 0$ and $a,b,m$ be integers
such that $1\le a<b\le m$. Assume that there exists a positive integer $k$
with $k\mid (b-a)$ such that neither of the congruences
$$2^{h}k\equiv a\pmod m\;\;\text{and}\;\; 2^{h}k\equiv b\pmod m,$$
possesses a solution for any $h\in\bN$. Then for all integers $n\ge 0$ we have
$$p_{n}(a,b,m; x,1)\ge p_{n}(b,a,m; x,1).$$
\end{theorem}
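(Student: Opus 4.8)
The plan is to reduce the statement to a single coefficient-positivity assertion extracted from the generating function \eqref{eqm10}, and then to use the arithmetic hypothesis on $k$ to prove it. Set $y=1$ in \eqref{eqm10} and substitute $u=z$, $v=z^{-1}$; the right-hand side then factors as
$$
\sum_{(\lambda,\mu)\in\cP\times\cD}z^{\ell_{a,m}(\lambda,\mu)-\ell_{b,m}(\lambda,\mu)}\,x^{\ell(\lambda)}q^{|\lambda|+|\mu|}=C(q)\,f(z)\,g(z^{-1}),
$$
where $f(u)=\dfrac{(-uq^a;q^m)_\infty}{(uxq^a;q^m)_\infty}$, $\ g(v)=\dfrac{(-vq^b;q^m)_\infty}{(vxq^b;q^m)_\infty}$, and
$$
C(q)=\frac{(-q;q)_\infty(xq^a,xq^b;q^m)_\infty}{(xq;q)_\infty(-q^a,-q^b;q^m)_\infty}=\prod_{\substack{j\ge1\\ j\not\equiv a,b\,(m)}}\frac{1+q^j}{1-xq^j}.
$$
Two features are crucial. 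First, since $x\ge0$, every factor $\frac{1+q^j}{1-xq^j}=1+\sum_{r\ge1}(x^r+x^{r-1})q^{jr}$ has non-negative coefficients, so $C(q)$ is a power series in $q$ with non-negative coefficients, and likewise each $\alpha_i:=[u^i]f(u)$. Second, $g(v)=f(vq^{\,b-a})$, so $[v^j]g(v)=q^{(b-a)j}\alpha_j$. Taking the $z$-coefficients of positive index minus those of negative index and setting $z=1$ yields
$$
\sum_{n\ge0}\bigl(p_{n}(a,b,m;x,1)-p_{n}(b,a,m;x,1)\bigr)q^n=C(q)\,\Delta(q),\qquad \Delta(q):=\sum_{i>j\ge0}\alpha_i\alpha_j\bigl(q^{(b-a)j}-q^{(b-a)i}\bigr),
$$
so it suffices to show that the power series $C(q)\Delta(q)$ has non-negative coefficients.

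Writing $d:=b-a$ and using $q^{dj}-q^{di}=(1-q^{d})\sum_{l=j}^{i-1}q^{dl}$, one telescopes
$$
\Delta(q)=(1-q^{d})\sum_{l\ge0}q^{dl}A_lB_l,\qquad A_l:=\sum_{i=0}^{l}\alpha_i,\quad B_l:=\sum_{i\ge l+1}\alpha_i,
$$
with $A_l$ and $B_l$ again power series in $q$ with non-negative coefficients (equivalently, $\Delta(q)=\sum_{e\ge1}(1-q^{de})\sum_{j\ge0}q^{dj}\alpha_j\alpha_{j+e}$). Hence everything reduces to showing that $C(q)$ absorbs the sign-indefinite factor $1-q^{d}$ — equivalently all of the $1-q^{de}$, since $1-q^{de}=(1-q^{d})(1+q^{d}+\dots+q^{(e-1)d})$. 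This is exactly where the hypothesis enters. Because $k\mid d$ one may further write $1-q^{d}=(1-q^{k})(1+q^{k}+\dots+q^{(d/k-1)k})$, reducing the task to absorbing a single $1-q^{k}$. The assumption that $k,2k,4k,\dots$ all avoid the residues $a$ and $b$ modulo $m$ (the congruence $2^hk\equiv a,b$ being excluded for $h\ge1$, and the value $k$ itself handled after possibly replacing $k$ by a more convenient admissible value) guarantees that $(1+q^{k}),(1+q^{2k}),(1+q^{4k}),\dots$ are among the factors of $C(q)$. Cancelling them one at a time via $(1-q^{2^{h}k})(1+q^{2^{h}k})=1-q^{2^{h+1}k}$ rewrites $C(q)(1-q^{k})$, in the limit $h\to\infty$, as
$$
\Bigl(\prod_{\substack{j\ge1,\ j\not\equiv a,b\,(m)\\ j\notin\{k,2k,4k,\dots\}}}\frac{1+q^j}{1-xq^j}\Bigr)\cdot\prod_{h\ge0}\frac{1}{1-xq^{2^{h}k}},
$$
a product of power series with non-negative coefficients. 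Combined with the non-negativity of $C(q)$, $A_l$, $B_l$, and of the cofactor $1+q^{k}+\dots+q^{(d/k-1)k}$, this gives $C(q)\Delta(q)\ge0$, hence the theorem.

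I expect the genuinely delicate point to be exactly this last absorption step: selecting $k$ (or a suitable dyadic multiple) and organizing the telescoping so that it both terminates and accounts for every factor $1-q^{de}$, while keeping the resulting infinite products meaningful as formal power series; the bookkeeping around the ``first'' term $k$, when $k\equiv a$ or $b\pmod m$, is the fiddly part, and may force using the structure of $\sum_l q^{dl}A_lB_l$ rather than the cruder bound $C(q)(1-q^{k})\ge0$. That a hypothesis of this shape is truly needed is already visible in small cases: for $m=3$, $a=1$, $b=2$ no admissible $k$ exists (one always has $2k\equiv b\pmod3$), and indeed $p_2(1,2,3;0,1)=0<1=p_2(2,1,3;0,1)$, so the bias fails there. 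Finally, I would note that for $x\ge1$ the factors $\frac1{1-xq^j}$ already dominate strongly enough that Theorem~\ref{mth1} applies with no extra hypothesis; the content of Theorem~\ref{mth2} lies in the range $0\le x<1$, where the ``neutral'' residue classes are thin and the hypothesis on $k$ is precisely what supplies the room needed to rebalance the residue counts.
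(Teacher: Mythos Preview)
Your reduction is exactly the paper's: both arguments come down to showing that $f_{a,b,m,x,1}(q)=(1-q^{b-a})\,C(q)$ has non-negative Taylor coefficients (this is Lemma~\ref{eqmprop}; your telescoped form $\Delta(q)=(1-q^{d})\sum_{l\ge0}q^{dl}A_lB_l$ reaches the same criterion by a direct route), and both then use $k\mid(b-a)$ together with the formal identity $(1-q^{k})\prod_{h\ge0}(1+q^{2^{h}k})=1$ to cancel $(1-q^k)$ against the factors $(1+q^{2^hk})$ sitting in the numerator of $C(q)=\prod_{j\not\equiv a,b\,(m)}(1+q^j)/(1-xq^j)$.

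The point you flag as ``fiddly'' --- whether $k$ itself (the $h=0$ term) avoids the residues $a,b$ modulo $m$ --- is not a side issue you can dispose of by ``possibly replacing $k$ by a more convenient admissible value''. The absorption requires \emph{every} factor $(1+q^{2^hk})$, $h\ge0$, to appear in $\prod_{j\not\equiv a,b}(1+q^j)$, and the hypothesis as literally stated ($h\in\bN$, i.e.\ $h\ge1$) does not exclude $k\equiv a$ or $k\equiv b\pmod m$; e.g.\ $(a,b,m,k)=(3,9,10,3)$ satisfies the hypothesis while $k\equiv a$. The paper's proof deals with this by simply asserting $\{2^hk:h\in\bN_0\}\subseteq\{j:j\not\equiv a,b\pmod m\}$, which in effect reads the hypothesis with $h\ge0$. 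So your instinct that something needs attention here is right, but the clean resolution is to take the hypothesis at $h\in\bN_0$ (both families in the Remark after the theorem still satisfy it), not to hunt for a replacement $k$ or to fall back on the finer structure of $\sum_l q^{dl}A_lB_l$.
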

\begin{remark}Let $a,b,m$ be integers such that $1\le a<b\le m$.
  It is not difficult to check that the following triples $(a,b,m)$ meet
  the conditions in Theorem~\ref{mth2}.
\begin{enumerate}[(1)]
\item Any even integer $m$ and odd integers $a, b$. (In this case, $k=2$
  guarantees the non-existence of solutions for any of the two congruences.)
\item Any integers $m,a,b$ such that $\gcd(a,b, m)$
  has an odd factor $\ge 3$. (In this case, one can take $k=1$.)
\end{enumerate}
\end{remark}

We finally prove the following asymptotic formulas for weighted
partitions with biases of \emph{symmetric} residue classes (by which we
mean that  in the biases of $p_n(a,b,m;x,y)$ the two residue classes
$a$ and $b$ satisfy the symmetry $b=m-a$ where $1\le a<m/2$).
Below, when writing $a/2m$ we mean $a/(2m)$, and other fractions
in one line notation (with denominators consisting of products) are
to be similarly interpreted in the remainder of this paper. 
\begin{theorem}\label{th2}For any given $1\le a< m/2$ and
  $(x,y)\in\{(1,0),(0,1),(1,1)\}$ we have
$$p_{n}(a,m-a,m; x,y)\sim c_{a,m}(x)p_n(x,y),$$
as $n\rrw +\infty$, where
$$\left(c_{a,m}(0), c_{a,m}(1)\right)=
\left(\frac{1}{2},~\frac{\psi\left(1/2+a/2m\right)-
    \psi\left({a}/{2m}\right)}{2\pi\csc(a\pi/m)}\right),$$
and where $\psi(x)=\Gamma'(x)/\Gamma(x)$ is the digamma function.
\end{theorem}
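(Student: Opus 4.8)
The plan is to extract an asymptotic formula from the bivariate generating function obtained by specializing \eqref{eqm10}, using a circle-method / saddle-point analysis near $q\to 1^-$. Recall that $p_n(a,m-a,m;x,y)$ counts (with weight $x^{\ell(\lambda)}y^{\ell(\mu)}$) those pairs with a strict bias toward the residue class $a$ over $b=m-a$. The key structural observation is that the involution $t\mapsto$ (swap the roles of $a$ and $b$) acting on parts in these two residue classes shows that, among pairs with $\ell_{a,m}\neq\ell_{b,m}$, exactly half are biased toward $a$; so one expects $c_{a,m}$ to be $\tfrac12$ times the ``probability'' that $\ell_{a,m}\neq\ell_{b,m}$ in the limit. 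When $y=0$ (the $x=1$, ordinary-partition case must be handled as a limit too, but the relevant constant is $c_{a,m}(1)$), almost all partitions of large $n$ have $\ell_{a,m}=\ell_{b,m}=0$ in the sense that the limiting constant is literally $\tfrac12\cdot 1=\tfrac12$ — wait, that is the $c_{a,m}(0)$ entry, i.e.\ the distinct-partition case $(x,y)=(0,1)$; there the generating function has no poles accumulating at $q=1$ from the relevant factors, and one gets the clean value $\tfrac12$. The genuinely delicate case is $c_{a,m}(1)$, covering both $(x,y)=(1,0)$ and $(x,y)=(1,1)$.

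Concretely, I would proceed as follows. First, from \eqref{eqm10} write
$$
\sum_{n\ge0}p_n(a,b,m;x,y)\,q^n
=\frac12\left(P(q)-\sum_{n\ge0}\big(\text{``balanced part''}\big)q^n\right)
+\text{(correction)},
$$
more precisely use the standard extraction: for a two-variable series $F(u,v;q)=\sum c_n(u,v)q^n$ with $c_n$ a polynomial in $u,v$, the generating function for the ``$\deg_u>\deg_v$'' part is obtained by a contour integral in a single auxiliary variable after setting $v=u^{-1}$, namely $\frac1{2\pi i}\oint \frac{F(u,u^{-1};q)}{1-u}\,\frac{du}{u}$ picks out the positive-power-in-$u$ part; combined with the $u\leftrightarrow u^{-1}$ symmetry of \eqref{eqm10} when $b=m-a$, $x=y$ fails — but the symmetry we need is only $a\leftrightarrow b$, which for $b=m-a$ corresponds to $q^a\leftrightarrow q^{m-a}$, and that is \emph{not} a symmetry of the product unless one also does $q\mapsto q^{-1}$-type manipulations. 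So instead I would directly analyze $\sum_n\big(p_n(a,b,m;x,y)+p_n(b,a,m;x,y)\big)q^n = P(x,y;q) - \sum_n p_n^{=}q^n$ where $p_n^=$ is the weighted count with $\ell_{a,m}=\ell_{b,m}$, and separately $\sum_n\big(p_n(a,b,m;x,y)-p_n(b,a,m;x,y)\big)q^n$, which by Theorem~\ref{mth1} is a series with non-negative coefficients and which I expect to be of \emph{strictly smaller exponential-times-subexponential order}, hence negligible. Then $p_n(a,b,m;x,y)\sim\tfrac12\big(p_n(x,y)-p_n^=\big)$, and the whole problem reduces to the asymptotics of $p_n^=$ relative to $p_n(x,y)$.

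Second, I would obtain $\sum_n p_n^= q^n$ by the diagonal extraction $\frac1{2\pi i}\oint F(u,1/u;x,y;q)\,\frac{du}{u}$ over $|u|=1$ applied to \eqref{eqm10}, i.e.\ the constant term in $u$ of
$$
\frac{(-yq;q)_\infty(-u^{-1}yq^{a},-uyq^{b},xq^{a},xq^{b};q^{m})_\infty}
{(xq;q)_\infty(-yq^{a},-yq^{b},uxq^{a},u^{-1}xq^{b};q^{m})_\infty}.
$$
For $x\ge1$ the dominant singularity as $n\to\infty$ is at $q\to1^-$, where $(xq;q)_\infty$ behaves like $\exp\big(-\mathrm{Li}_2(x\text{-dependent})/(1-q)+\cdots\big)$ by the standard Mellin/Euler–Maclaurin estimate; the extra finite products $(\cdot;q^m)_\infty$ contribute, in the limit $q=e^{-\varepsilon}$, $\varepsilon\to0^+$, \emph{finite} constants obtained by replacing $q^a\to1$, $q^b\to1$, $q^m\to1$ in the convergent products — EXCEPT that several of those products degenerate (a factor $(1;q^m)_\infty=0$), so one must keep the leading $\varepsilon$-behavior. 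Carrying out this local expansion, the $u$-integral becomes, to leading order, a fixed integral (independent of $\varepsilon$) times the main singular factor, and a saddle-point/Tauberian argument (exactly as in Meinardus-type theorems, or via the transfer theorems used by Chern and by Kim–Kim–Lovejoy in the cited works) converts this into $p_n^=\sim (1-c_{a,m}(x)/\,\cdot\,)\,p_n(x,y)\cdot 2$? — no: it yields $p_n^=\sim(1-2c_{a,m}(x))p_n(x,y)$, equivalently $p_n(a,b,m;x,y)\sim c_{a,m}(x)p_n(x,y)$. The constant $c_{a,m}(1)$ emerges as $\tfrac12$ minus (half of) the value of that fixed $u$-integral, which I expect to evaluate in closed form via the residue at $u=1$ of $\prod(1-u q^{\cdot})^{\pm1}$-type factors, and after simplification using the reflection/duplication identities for $\Gamma$ it should collapse to $\dfrac{\psi(1/2+a/2m)-\psi(a/2m)}{2\pi\csc(a\pi/m)}$; the appearance of $\csc(a\pi/m)=\Gamma(a/m)\Gamma(1-a/m)/\pi$ and of $\psi$ at $a/2m$ and $1/2+a/2m$ is the telltale sign that a $\Gamma$-function Barnes-type integral or the partial-fraction expansion of $1/\sin$ is being used.

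Third and finally, I would verify the two endpoint specializations. For $(x,y)=(0,1)$ the factor $(xq;q)_\infty$ disappears and there is no essential singularity at $q=1$ from that source; the dominant behavior of $q(n)=p_n(0,1)$ comes instead from $q\to1^-$ in $(-q;q)_\infty$, and the ratio $p_n^=/q(n)$ tends to a constant computed from the $u$-integral with $x=0$, which trivializes (the $u$-dependent factors in numerator and denominator both become $(-u^{-1}q^a;q^m)_\infty(\cdots)$ with no $x$ in the denominator's $u$-part) — the constant-term extraction then gives probability $1$ of $\ell_{a,m}$ and $\ell_{b,m}$ being independent and ``equal with probability $0$ in the limit'' is false; rather one finds $\Pr(\ell_{a,m}=\ell_{b,m})\to0$, hence $c_{a,m}(0)=\tfrac12$. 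The main obstacle, I expect, is \textbf{justifying that the ``difference'' series $\sum_n(p_n(a,b,m;x,y)-p_n(b,a,m;x,y))q^n$ is asymptotically negligible} compared to $p_n(x,y)$: one needs an \emph{upper} bound on its coefficients of strictly smaller growth, which does not follow from positivity alone and requires either a separate generating-function estimate (showing this difference series is analytic in a strictly larger disk, or has a weaker singularity at $q=1$) or an explicit combinatorial injection with a substantial ``defect''. Controlling the error terms uniformly while the $q^{a},q^{b},q^{m}\to1$ products degenerate — i.e.\ interchanging the $\varepsilon\to0$ limit with the $u$-contour integral and with the Tauberian passage to coefficients — is the technical heart of the argument.
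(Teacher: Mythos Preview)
Your proposal rests on the decomposition
\[
p_n(a,b,m;x,y)=\tfrac12\bigl(p_n(x,y)-p_n^{=}\bigr)+\tfrac12\bigl(p_n(a,b,m;x,y)-p_n(b,a,m;x,y)\bigr),
\]
together with the expectation that the second summand is of strictly smaller order than $p_n(x,y)$. That expectation is \emph{false} for $x=1$, and this is already forced by the statement you are trying to prove. Using the reflection formula for $\psi$ one checks
\[
c_{a,m}(1)+c_{m-a,m}(1)=1,
\]
so $p_n^{=}=o\bigl(p_n(x,y)\bigr)$; hence if the difference were negligible you would get $p_n(a,m-a,m;x,y)\sim\tfrac12\,p_n(x,y)$ for \emph{all} $(x,y)\in S$. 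But $c_{a,m}(1)\ne\tfrac12$ in general (e.g.\ $c_{1,3}(1)\approx 0.69$), so the difference $p_n(a,b,m;x,y)-p_n(b,a,m;x,y)$ is itself asymptotic to $(2c_{a,m}(1)-1)\,p_n(x,y)$, of the \emph{same} order as the main term. Your strategy therefore collapses precisely in the two cases $(1,0)$ and $(1,1)$ where the nontrivial constant arises; only the distinct-partition case $(0,1)$, with $c_{a,m}(0)=\tfrac12$, is compatible with a negligible difference. The ``main obstacle'' you flagged is not a technicality to be overcome but an actual obstruction.

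For comparison, the paper does not use any sum/difference symmetry. It works directly with the generating function $G_{xy}(a,m;q)=\sum_n p_n(a,m-a,m;x,y)q^n$ and, for each $(x,y)\in S$, rewrites it in closed form via classical theta identities (Jacobi triple product for $(0,1)$, the partial-fraction expansion of $1/\theta$ for $(1,0)$, a Kronecker/Ramanujan ${}_1\psi_1$ identity for $(1,1)$); see Lemma~\ref{lemmm1}. It then computes the asymptotics of $G_{xy}$ near every $m$-th root of unity (Lemmas~\ref{lem33}--\ref{proop}) and feeds this into the residue-class Tauberian theorem (Theorem~\ref{propit}), which is needed because one only has the weak monotonicity $p_{n+m}\ge p_n$ from Lemma~\ref{promt}, not $p_{n+1}\ge p_n$. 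The constant $c_{a,m}(1)$ then drops out of the asymptotics of the explicit sums $\sum_{n\ge0}(-1)^n q^{\cdots}/(1-q^{mn+a})$ and $\sum_{n\ge1} q^{an}/(1+q^{mn})$, via the elementary evaluation $\sum_{k\ge0}(-1)^k/(k+a/m)=\tfrac12(\psi(\tfrac{m+a}{2m})-\psi(\tfrac{a}{2m}))$.
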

The proof of Theorem~\ref{th2} employs a residue class analogue of
Ingham's Tauberian theorem \cite[Theorem~1]{MR5522}, which, to the best
of the authors' knowledge, has not appeared before in the literature.
We state it as the following theorem and believe that it is of independent
interest. We give its proof in Section~\ref{sec31}.
\begin{theorem}\label{propit}Let $f(q)=\sum_{n\ge 0}c_nq^n$ be a
power series whose coefficients $c_n$ are non-negative. Suppose that
for some positive integer $m$ and any integer $1\le h<m$, one has
\begin{enumerate}[(i)]
  \item \label{c1} $c_{n+m}\ge c_n$ for all $n\ge 0$, and
  \item \label{c2} there exist constants $\gamma\in\bR$ and
    $\alpha, \beta,\rho\in\bR_+$ such that
    $$f(e^{-z})\sim \alpha z^{\gamma}\exp\left(\beta z^{-\rho}/\rho\right)
    \;\;\text{and}\;\; f(e^{-z}e^{2\pi\ri h/m})=o\left(f(e^{-z})\right),$$
    when $z\rrw 0$ in every fixed Stolz angle
    $|\arg (z)|\le \Delta$ $(0<\Delta<\pi/2)$.
\end{enumerate}
Then, as $n\to \infty$,
$$c_n\sim \frac{\alpha \beta^{\frac{1+2\gamma}{2(1+\rho)}}}
{\sqrt{2\pi(1+\rho)}}n^{-\frac{1+2\gamma}{2(1+\rho)}-\frac{1}{2}}
\exp\left((1+1/\rho)\beta^{1/(1+\rho)} n^{\rho/(1+\rho)}\right).$$
\end{theorem}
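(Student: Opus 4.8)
The plan is to reduce the statement to its $m=1$ instance, which is Ingham's classical Tauberian theorem \cite[Theorem~1]{MR5522}, by passing to the $m$-dissection of $f$. Write
\[
  f(q)=\sum_{j=0}^{m-1}q^{j}\,g_j(q^m),\qquad g_j(Q):=\sum_{k\ge 0}c_{j+km}\,Q^{k},
\]
so that $c_{j+km}=[Q^{k}]g_j(Q)$. Hypothesis~(i) says precisely that, for each fixed $j\in\{0,\dots,m-1\}$, the sequence $(c_{j+km})_{k\ge 0}$ is non-negative and non-decreasing, so each $g_j$ already satisfies the coefficient side of the hypotheses of \cite[Theorem~1]{MR5522}.

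The one substantive step is to transfer the radial information of hypothesis~(ii) to the sections $g_j$. Put $\zeta:=e^{2\pi\ri/m}$. Since $(e^{-z}\zeta^{h})^{m}=e^{-mz}$, one has $f(e^{-z}\zeta^{h})=\sum_{j'=0}^{m-1}e^{-j'z}\zeta^{hj'}g_{j'}(e^{-mz})$; multiplying by $\zeta^{-hj}$, summing over $h$, and using that $\sum_{h=0}^{m-1}\zeta^{h(j'-j)}$ equals $m$ if $j'\equiv j\pmod m$ and $0$ otherwise, we obtain the finite Fourier inversion
\[
  g_j(e^{-mz})=\frac{e^{jz}}{m}\sum_{h=0}^{m-1}\zeta^{-hj}\,f\!\left(e^{-z}\zeta^{h}\right).
\]
By~(ii) every term with $1\le h\le m-1$ is $o(f(e^{-z}))$ as $z\to0$ inside a fixed Stolz angle, hence so is their finite sum; together with $e^{jz}\to1$ this gives $g_j(e^{-mz})\sim\frac1m f(e^{-z})$. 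Substituting $w=mz$, which preserves the Stolz angle, and invoking the first assertion of~(ii),
\[
  g_j(e^{-w})\ \sim\ \tfrac1m\,f\!\left(e^{-w/m}\right)\ \sim\ \big(\alpha m^{-\gamma-1}\big)\,w^{\gamma}\exp\!\big((\beta m^{\rho})\,w^{-\rho}/\rho\big)
  \quad(w\to0\text{ in a fixed Stolz angle}).
\]
Hence $g_j$ meets all hypotheses of \cite[Theorem~1]{MR5522} with the parameters $(\alpha_j,\beta_j,\gamma,\rho)=\big(\alpha m^{-\gamma-1},\ \beta m^{\rho},\ \gamma,\ \rho\big)$ (and its radius of convergence is necessarily $1$).

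Applying \cite[Theorem~1]{MR5522} to each $g_j$ yields, as $k\to\infty$,
\[
  c_{j+km}\ \sim\ \frac{\alpha_j\,\beta_j^{\frac{1+2\gamma}{2(1+\rho)}}}{\sqrt{2\pi(1+\rho)}}\,
  k^{-\frac{1+2\gamma}{2(1+\rho)}-\frac12}\,\exp\!\big((1+1/\rho)\,\beta_j^{1/(1+\rho)}\,k^{\rho/(1+\rho)}\big).
\]
Now set $j=n\bmod m$ and $k=(n-j)/m$, so $k\to\infty$ as $n\to\infty$. In the exponent the factors $m^{\pm\rho/(1+\rho)}$ from $\beta_j^{1/(1+\rho)}$ and $k^{\rho/(1+\rho)}$ cancel, leaving $(1+1/\rho)\beta^{1/(1+\rho)}(n-j)^{\rho/(1+\rho)}=(1+1/\rho)\beta^{1/(1+\rho)}n^{\rho/(1+\rho)}+O(n^{-1/(1+\rho)})$, whose error term tends to $0$; and in the algebraic prefactor the total power of $m$ collected from $\alpha_j$, from $\beta_j^{(1+2\gamma)/(2(1+\rho))}$, and from $k^{-(1+2\gamma)/(2(1+\rho))-\frac12}=m^{(1+2\gamma)/(2(1+\rho))+\frac12}n^{-(1+2\gamma)/(2(1+\rho))-\frac12}(1+o(1))$ is $-\gamma-1+\frac{(1+2\gamma)(\rho+1)}{2(1+\rho)}+\frac12=0$. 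Because there are only finitely many residue classes, these asymptotics are uniform in $j$, and they collapse to exactly the stated asymptotic for $c_n$.

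The point that requires genuine care is the middle step: hypothesis~(ii) must be used along full Stolz angles, not merely along the positive real axis, so that \cite[Theorem~1]{MR5522} really applies to each section $g_j$; and one must check, as above, that the powers of $m$ in the final prefactor cancel and that the subleading contribution to the exponent is $o(1)$ rather than merely $o(n^{\rho/(1+\rho)})$ --- this is exactly why the constant in Theorem~\ref{propit} turns out to be independent of $m$. (Alternatively one may avoid citing \cite[Theorem~1]{MR5522} as a black box and prove the required statement for each $g_j$ directly, by a one-variable saddle-point analysis of $g_j(e^{-w})e^{kw}$ near its saddle point, where the monotonicity of $(c_{j+km})_k$ supplied by~(i) plays the role of the usual minor-arc estimates.)
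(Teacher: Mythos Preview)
Your proof is correct and follows essentially the same route as the paper's own argument: both pass to the $m$-dissection $g_j(Q)=\sum_{k\ge 0}c_{j+km}Q^k$ via the orthogonality of $m$-th roots of unity, use hypothesis~(ii) to conclude $g_j(e^{-w})\sim \alpha m^{-\gamma-1}w^{\gamma}\exp(\beta m^{\rho}w^{-\rho}/\rho)$ in Stolz angles, invoke Ingham's Tauberian theorem on each section (the paper states this as its Theorem~\ref{thitt}), and then replace $mk$ by $mk+j$ using $(n+r)^{\kappa}=n^{\kappa}+o(1)$ for $\kappa<1$. Your write-up is in fact slightly more explicit than the paper's in verifying that the accumulated power of $m$ in the prefactor is exactly zero.
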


The rest of the paper is organized as follows. In Section~\ref{sec2},
we give the proof of Theorems~\ref{mth1} and \ref{mth2}. This is achieved
with the help of some $q$-series transformations and a new overpartitions
analogue of a theorem of Andrews \cite[Theorem~3]{MR289445}.
In Section~\ref{sec3}, we first provide the proof of our residue class
analogue of Ingham's Tauberian theorem, i.e., of Theorem~\ref{propit}.
Then we turn to partition functions with bias of symmetric residue classes
and establish their generating functions and asymptotics. We close in
Section~\ref{sec4} with some remarks on residue class biases in
distinct partitions.

\section{The proofs of Theorems \ref{mth1} and \ref{mth2}}\label{sec2}

In this section, we provide proofs of Theorems \ref{mth1} and \ref{mth2}.
To study $p_{n}(a,b,m; x,y)$, we first derive its generating function.
Using Cauchy's $q$-binomial theorem
(cf.\ \cite[Appendix~(II.3)]{MR2128719}):
$$\frac{(\alpha t;q)_\infty}{(t;q)_\infty}
=\sum_{n\ge 0}\frac{(\alpha;q)_nt^n}{(q;q)_n},$$
valid for $|t|<1$, we have
$$\frac{(-uyq^a,-vyq^b;q^m)_\infty}{(uxq^a,vxq^b;q^m)_\infty}
=\sum_{n_1\ge 0}\frac{(-y/x;q^m)_{n_1}(xuq^{a})^{n_1}}{(q^m;q^m)_{n_1}}
\sum_{n\ge 0}\frac{(-y/x;q^m)_{n}(xvq^{b})^{n}}{(q^m;q^m)_{n}}.$$
Therefore, by the definition \eqref{eqm} for $p_{n}(a,b,m; x,y)$
and \eqref{eqm10}, we obtain
\begin{align}\label{eqmgp}
\sum_{n\ge 0}p_{n}(a,b,m; x,y)q^n
  &=\frac{(-yq;q)_\infty(xq^a,xq^b;q^m)_\infty}
    {(xq;q)_\infty(-yq^a,-yq^b;q^m)_\infty}\nonumber\\
  &\quad\times\sum_{\substack{n_1,n\ge 0\\ n_1> n}}
  \frac{x^{n_1}(-y/x;q^m)_{n_1}x^{n}(-y/x;q^m)_{n}q^{an_1+bn}}
  {(q^m;q^m)_{n_1}(q^m;q^m)_{n}}.
\end{align}

Writing $n_1=k+1+n$ in \eqref{eqmgp}, and using the simple
identity $(\alpha; q)_{n+1+k}=(1-\alpha)(\alpha q;q)_{n+k}$, we get
\begin{align}\label{eqmss0}
(1-q^m)\sum_{n\ge 0}p_{n}(a,b,m; x,y)q^n
  &=\frac{(-yq;q)_\infty(xq^a,xq^b;q^m)_\infty}
    {(xq;q)_\infty(-yq^a,-yq^b;q^m)_\infty}(x+y)q^a\nonumber\\
  &\quad\times\sum_{n,k\ge 0}
    \frac{(-q^my/x;q^m)_{n+k}(-y/x;q^m)_n x^{2n+k}q^{(a+b)n+ka}}
    {(q^{2m};q^m)_{n+k}(q^m;q^m)_{n}}.
\end{align}
Equation \eqref{eqmss0} immediately yields the following monotonicity result,
which we will use for deriving asymptotic formulas for $p_{n}(a,m-a,m; x,y)$
in Section~\ref{sec3}.
\begin{lemma}\label{promt}For any $x,y\ge 0$ and $n\ge 0$, we have
\begin{equation*}
p_{n+m}(a,b,m; x,y)\ge p_{n}(a,b,m; x,y).
\end{equation*}
\end{lemma}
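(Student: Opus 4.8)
The plan is to read off the inequality directly from the series expansion in \eqref{eqmss0}. The crucial observation is that the right-hand side of \eqref{eqmss0} is a product of two pieces: the prefactor
$$\frac{(-yq;q)_\infty(xq^a,xq^b;q^m)_\infty}{(xq;q)_\infty(-yq^a,-yq^b;q^m)_\infty}(x+y)q^a,$$
and the double sum over $n,k\ge 0$. I would first argue that \emph{every} Taylor coefficient (in $q$) of each of these two pieces is non-negative, whenever $x,y\ge 0$. For the double sum this is visible term by term once one notes that $(-q^m y/x;q^m)_{n+k}$ and $(-y/x;q^m)_n$ expand with non-negative coefficients in $q$ (each factor $1+(y/x)q^{jm}$ does, and the spurious powers of $x$ in the denominators cancel against the explicit $x^{2n+k}$, so really one should think of the summand as $(-q^m;q^m)$-type factors in the variable pair, i.e. rewrite $(-y/x;q^m)_n x^n=\prod_{j=0}^{n-1}(x+yq^{jm})$, which manifestly has non-negative coefficients in $x,y,q$). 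For the prefactor, $(xq^a,xq^b;q^m)_\infty/(xq;q)_\infty$ has non-negative $q$-coefficients because, combinatorially, it is (up to the explicit factors) a generating function counting partitions with the parts $\equiv a,b\pmod m$ removed — more cleanly, $(xq^a;q^m)_\infty^{-1}$ and $(xq^b;q^m)_\infty^{-1}$ have non-negative coefficients and $(xq^a,xq^b;q^m)_\infty\cdot$ nothing negative survives after we also absorb $(xq;q)_\infty^{-1}$; alternatively one invokes \eqref{eqmgp} together with the definition \eqref{eqm}, which already exhibits $\sum_n p_n(a,b,m;x,y)q^n$ as a power series with non-negative coefficients for $x,y\ge 0$.

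Granting non-negativity of all $q$-coefficients on the right of \eqref{eqmss0}, write $\sum_{n\ge0}p_n(a,b,m;x,y)q^n=\sum_{n\ge0}d_n q^n$ and recall $(1-q^m)^{-1}=\sum_{j\ge0}q^{jm}$. Then \eqref{eqmss0} says $\sum_n d_n q^n=(1-q^m)\sum_n e_n q^n$ where $e_n\ge0$ for all $n$; hence $d_n=e_n-e_{n-m}$ (with $e_{i}=0$ for $i<0$), equivalently $e_n=d_n+d_{n-m}+d_{n-2m}+\cdots=\sum_{j\ge0}d_{n-jm}$. Therefore
$$d_{n+m}-d_n=(e_{n+m}-e_n)-(e_n-e_{n-m})=e_{n+m}-2e_n+e_{n-m},$$
which is not obviously signed; so instead I would argue directly: since $(1-q^m)\sum e_n q^n$ has non-negative coefficients we get, for each $n$, $e_n\ge e_{n-m}$, i.e. the sequence $(e_n)$ is non-decreasing along residue classes mod $m$. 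But $e_n=\sum_{j\ge0}d_{n-jm}$, so $p_{n+m}(a,b,m;x,y)-p_n(a,b,m;x,y)=d_{n+m}-d_n=(e_{n+m}-e_{n-m})-(e_n-e_{n-2m})$... this still requires care. The clean route is: from \eqref{eqmss0}, $d_n=e_n-e_{n-m}$, and we must show $d_{n+m}\ge d_n$, i.e. $e_{n+m}-e_n\ge e_n-e_{n-m}$, i.e. $(e_n)$ is \emph{mid-point convex} along each residue class; this does \emph{not} follow from non-negativity of the $e_n$ alone. Hence the honest plan is different: I would \emph{not} pass through \eqref{eqmss0}, but instead use \eqref{eqmgp} together with the factor $1/(1-q^m)$ already implicit there.

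Concretely, here is the clean argument I would write. From \eqref{eqmgp},
$$\sum_{n\ge0}p_n(a,b,m;x,y)q^n=\frac{1}{1-q^m}\cdot (1-q^m)\cdot(\text{RHS of }\eqref{eqmgp})=\frac{1}{1-q^m}\,G(q),$$
where, by \eqref{eqmss0}, $G(q)=(x+y)q^a\cdot(\text{prefactor})\cdot(\text{double sum})$ is a power series in $q$ with \emph{non-negative} coefficients (this is the non-negativity claim established in the first paragraph). Writing $G(q)=\sum_{n\ge0}g_n q^n$ with $g_n\ge0$, we get $p_n(a,b,m;x,y)=\sum_{j\ge0}g_{n-jm}$, a sum over the residue class of $n$ modulo $m$ of non-negative terms. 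Consequently
$$p_{n+m}(a,b,m;x,y)-p_n(a,b,m;x,y)=\sum_{j\ge0}g_{n+m-jm}-\sum_{j\ge0}g_{n-jm}=g_{n+m}\ge0,$$
since the two sums differ precisely by the single extra term $g_{n+m}$ (every term $g_{n-jm}$ with $j\ge0$ appears in both, shifted by one index). This gives $p_{n+m}(a,b,m;x,y)\ge p_n(a,b,m;x,y)$ as desired.

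The only real content is therefore the non-negativity of the $q$-coefficients of $G(q)$, and the main (though mild) obstacle is presenting that cleanly: the factor $(-y/x;q^m)$ carries a formal $1/x$, so one must group $(-y/x;q^m)_n\,x^n=\prod_{j=0}^{n-1}(x+yq^{jm})$ and $(-q^m y/x;q^m)_{n+k}\,x^{n+k}=\prod_{j=1}^{n+k}(x+yq^{jm})$ before claiming positivity, and then check that the remaining explicit $x^{2n+k}$ in \eqref{eqmss0} exactly feeds these two products (it contributes $x^n$ to the first, $x^{n+k}$ to the second, leaving nothing over). The prefactor's positivity is handled either combinatorially, or by the remark that \eqref{eqmgp}–\eqref{eqm} already show $\sum_n p_n(a,b,m;x,y)q^n\in\bR_{\ge0}[[q]]$ for $x,y\ge0$, and multiplying a non-negative power series by $(1-q^m)$ and the monomial-type factors cannot be used circularly — so I would instead note that each of $(-yq;q)_\infty$, $(xq^a;q^m)_\infty^{-1}$, $(xq^b;q^m)_\infty^{-1}$, $(xq;q)_\infty^{-1}$, $(-yq^a;q^m)_\infty$ (wait — $(-yq^a;q^m)_\infty$ sits in the \emph{denominator} of the prefactor, so one needs $(-yq^a,-yq^b;q^m)_\infty^{-1}$, which again has non-negative coefficients) combine to a product of power series each in $\bR_{\ge0}[[q]]$, hence so does $G(q)$. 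Once this bookkeeping is done, the lemma drops out in one line as above.
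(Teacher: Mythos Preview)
Your eventual ``clean argument'' is exactly the paper's proof: equation~\eqref{eqmss0} reads $(1-q^m)\sum_n p_n(a,b,m;x,y)q^n=G(q)$ with $G(q)=\sum_n g_n q^n$ having $g_n\ge 0$, whence $p_{n+m}-p_n=g_{n+m}\ge 0$. So the approach matches.

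However, your justification that the prefactor of $G(q)$ lies in $\bR_{\ge 0}[[q]]$ is flawed in the final paragraph. You list $(xq^a;q^m)_\infty^{-1}$ and $(xq^b;q^m)_\infty^{-1}$ as factors, but in the prefactor these sit in the \emph{numerator}, and $(xq^a;q^m)_\infty=\prod_{j\ge 0}(1-xq^{a+jm})$ certainly has negative coefficients. Likewise, your parenthetical claim that $(-yq^a,-yq^b;q^m)_\infty^{-1}$ has non-negative coefficients is false: already $(1+yq^a)^{-1}=1-yq^a+y^2q^{2a}-\cdots$. The prefactor cannot be handled factor by factor; one must combine numerator and denominator to obtain the telescoped form
\[
\frac{(-yq;q)_\infty(xq^a,xq^b;q^m)_\infty}{(xq;q)_\infty(-yq^a,-yq^b;q^m)_\infty}
=\prod_{\substack{j\ge 1\\ j\not\equiv a,b\pmod m}}\frac{1+yq^j}{1-xq^j},
\]
which is the ``parts $\not\equiv a,b$'' generating function you alluded to combinatorially in your first paragraph. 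With this correction (and your correct regrouping $(-y/x;q^m)_n\,x^n=\prod_{j=0}^{n-1}(x+yq^{jm})$ for the double sum), the non-negativity of $G(q)$ is clear and the proof goes through.
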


Equation \eqref{eqmss0} also implies that
\begin{align*}
  &\sum_{n\ge 0}
  \left(p_{n}(a,b,m; x,y)-p_{n}(b,a,m; x,y)\right)q^n\nonumber\\
  &=\frac{1}{1-q^m}(1-q^{b-a})\frac{(-yq;q)_\infty(xq^a,xq^b;q^m)_\infty}
  {(xq;q)_\infty(-yq^a,-yq^b;q^m)_\infty}(x+y)q^a\nonumber\\
  &\quad\times\sum_{n,k\ge 0}\frac{(1-q^{(k+1)(b-a)})(-q^my/x;q^m)_{n+k}
    (-y/x;q^m)_n x^{2n+k}q^{(a+b)n+ak}}{(1-q^{b-a})(q^{2m};q^m)_{n+k}(q^m;q^m)_{n}}.
\end{align*}
Note that the $q$-series expansion of the above infinite double sum
has non-negative coefficients, with constant term equal to $1$,
provided that $1\le a<b\le m$ and $x,y\ge 0$, because of
$$\frac{1-q^{(k+1)(b-a)}}{1-q^{b-a}}=\sum_{0\le j\le k}q^{j(b-a)}.$$
This immediately implies the following lemma.
\begin{lemma}\label{eqmprop}Let $1\le a<b\le m$ and $x,y\ge 0$ with $x+y> 0$.
Define
$$f_{a,b,m,x,y}(q)=(1-q^{b-a})\frac{(-yq;q)_\infty}{(xq;q)_\infty}
\frac{(xq^a,xq^b;q^m)_\infty}{(-yq^a,-yq^b;q^m)_\infty}.$$
Then
$$p_{n}(a,b,m; x,y)\ge p_{n}(b,a,m; x,y)\ge 0$$
for all integers $n\ge 0$, provided that all the Taylor coefficients of
$f_{a,b,m,x,y}(q)$ about $q=0$ are non-negative.
\end{lemma}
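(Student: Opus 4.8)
The plan is to reduce everything to a single nonnegativity-of-Taylor-coefficients statement, which is exactly the content of Lemma~\ref{eqmprop}, and then to verify that hypothesis for the relevant values of $x$ and $y$. First I would record the algebraic identity derived above from \eqref{eqmss0}: the difference $\sum_{n\ge0}\bigl(p_n(a,b,m;x,y)-p_n(b,a,m;x,y)\bigr)q^n$ equals $(x+y)q^a(1-q^m)^{-1}$ times $f_{a,b,m,x,y}(q)/(1-q^{b-a})$ times the double sum $\sum_{n,k\ge0}\frac{(1-q^{(k+1)(b-a)})(-q^my/x;q^m)_{n+k}(-y/x;q^m)_n\,x^{2n+k}q^{(a+b)n+ak}}{(1-q^{b-a})(q^{2m};q^m)_{n+k}(q^m;q^m)_n}$. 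Using the finite geometric expansion $\frac{1-q^{(k+1)(b-a)}}{1-q^{b-a}}=\sum_{0\le j\le k}q^{j(b-a)}$, the double sum has manifestly non-negative coefficients once $x,y\ge0$ and $1\le a<b\le m$, because every $q$-Pochhammer factor appearing, namely $(-q^my/x;q^m)_{n+k}$, $(-y/x;q^m)_n$, $1/(q^{2m};q^m)_{n+k}$ and $1/(q^m;q^m)_n$, has non-negative $q$-expansion (the numerator ones because $-y/x\le0$ makes $(- \text{nonneg})$ Pochhammers expand with nonnegative coefficients, the denominator ones trivially). The factor $(x+y)q^a/(1-q^m)$ is also a non-negative power series. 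Hence the entire difference series has non-negative coefficients provided the single remaining factor $f_{a,b,m,x,y}(q)$ does, and moreover its constant term is $1$, so in fact $p_n(a,b,m;x,y)\ge p_n(b,a,m;x,y)\ge0$ follows (the second inequality because $p_n(b,a,m;x,y)$ is itself a sum of monomials with non-negative coefficients). This is precisely the assertion of the lemma.

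The proof of the lemma statement is therefore essentially bookkeeping: carefully justify that each named factor has non-negative Taylor coefficients about $q=0$, assemble them, and read off both inequalities. The one point requiring a word of care is the role of $x$ in denominators such as $-y/x$: if $x=0$ the expressions in \eqref{eqmss0} must be interpreted as limits, and one should note that in the case $x=0$ one has $(-y/x;q^m)_n\,x^{n}=(x+y)(x+yq^m)\cdots(x+yq^{(n-1)m})\big/$(something) — more cleanly, one rewrites $x^{\bullet}(-y/x;q^m)_{\bullet}$ as a product of factors of the form $(x+yq^{jm})$ so that no division by $x$ ever literally occurs, and then $x=0$ is an honest specialization. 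With that rewriting the non-negativity claims are uniform in $x,y\ge0$ with $x+y>0$.

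I do not expect any genuine obstacle here; the content of the lemma is to isolate the combinatorial heart of Theorems~\ref{mth1} and \ref{mth2} into the clause ``provided all Taylor coefficients of $f_{a,b,m,x,y}(q)$ are non-negative.'' The real work — verifying that clause — is deferred: for $x\ge1$, $y\ge0$ (Theorem~\ref{mth1}) one will need a $q$-series argument showing $(1-q^{b-a})\frac{(xq^a,xq^b;q^m)_\infty}{(-yq^a,-yq^b;q^m)_\infty}\cdot\frac{(-yq;q)_\infty}{(xq;q)_\infty}$ has non-negative coefficients, and for $x\ge0$, $y=1$ (Theorem~\ref{mth2}) one invokes the stated overpartition analogue of Andrews' theorem together with the arithmetic condition on $k\mid(b-a)$ avoiding the congruences $2^hk\equiv a,b\pmod m$. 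But none of that is needed for the lemma itself: its proof is exactly the paragraph above, and the only mildly delicate step is handling the $x=0$ boundary case by the product rewriting so that the non-negativity statements hold without division.
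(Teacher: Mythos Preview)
Your approach is essentially identical to the paper's: both derive the factorization of the difference generating function from \eqref{eqmss0}, observe that the double sum (via the finite geometric expansion $\frac{1-q^{(k+1)(b-a)}}{1-q^{b-a}}=\sum_{0\le j\le k}q^{j(b-a)}$) and the factor $(x+y)q^a/(1-q^m)$ have non-negative coefficients, and conclude that the non-negativity of the Taylor coefficients of $f_{a,b,m,x,y}(q)$ suffices. There is a small bookkeeping slip in your factorization --- you have written $f_{a,b,m,x,y}(q)/(1-q^{b-a})$ where the correct factor is just $f_{a,b,m,x,y}(q)$ (the $(1-q^{b-a})$ already sits inside the definition of $f$ and matches the denominator you placed inside the double sum) --- but this is harmless for the logic, since $1/(1-q^{b-a})$ has non-negative coefficients anyway. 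Your remark on handling $x=0$ by rewriting $x^n(-y/x;q^m)_n=\prod_{0\le j<n}(x+yq^{jm})$ is a point the paper leaves implicit and is worth making explicit.
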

Lemma~\ref{eqmprop} will be used to give the proof of Theorem~\ref{mth1}
in the cases $(a,b)\neq (1,2)$, and the proof of Theorem~\ref{mth2}.
To achieve that we will need the following proposition.
\begin{proposition}\label{pron12}
For $x\ge 1, y\ge 0$ and $(a,b)\neq (1,2)$, all the Taylor coefficients of
$f_{a,b,m,x,y}(q)$ about $q$ are non-negative.
\end{proposition}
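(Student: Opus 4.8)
The plan is to factor $f_{a,b,m,x,y}(q)$ into pieces each of which is manifestly a power series with non-negative coefficients. We begin by writing
$$f_{a,b,m,x,y}(q)=(1-q^{b-a})\cdot\frac{(-yq;q)_\infty}{(xq;q)_\infty}\cdot\frac{(xq^a;q^m)_\infty}{(-yq^a;q^m)_\infty}\cdot\frac{(xq^b;q^m)_\infty}{(-yq^b;q^m)_\infty}.$$
The factor $(-yq;q)_\infty$ has non-negative coefficients for $y\ge 0$, and $1/(xq;q)_\infty$ has non-negative coefficients for $x\ge 0$; so the middle factor is harmless. The difficulty is that $(xq^c;q^m)_\infty$ for $x\ge 1$ contributes \emph{negative} coefficients, which must be absorbed by the $1/(-yq^c;q^m)_\infty$ factor (non-negative for $y\ge 0$) together with the prefactor $(1-q^{b-a})$. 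So the real content is to control the sign of
$$g(q):=(1-q^{b-a})\,\frac{(xq^a,xq^b;q^m)_\infty}{(-yq^a,-yq^b;q^m)_\infty},$$
since $f_{a,b,m,x,y}(q)=g(q)\cdot(-yq;q)_\infty/(xq;q)_\infty$ and the product of two non-negative-coefficient series is non-negative.

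Next I would try to telescope $g(q)$. The key combinatorial identity I expect to use is an overpartition analogue of Andrews' theorem mentioned in the introduction (\cite[Theorem~3]{MR289445}), promised in Section~\ref{sec2}: roughly, a ``difference of two Durfee-type generating functions'' expands with non-negative coefficients. Concretely, the prefactor $1-q^{b-a}$ should combine with $(xq^a;q^m)_\infty$ and $(xq^b;q^m)_\infty$ — both of which start $1-xq^a+\cdots$ and $1-xq^b+\cdots$ — so that the leading cancellations telescope. One clean way: write $(xq^a;q^m)_\infty=(1-xq^a)(xq^{a+m};q^m)_\infty$ and similarly for $b$, and observe
$$(1-q^{b-a})(1-xq^a)(1-xq^b)=(1-xq^a)(1-xq^b)-q^{b-a}(1-xq^a)(1-xq^b),$$
then recombine the $q^{b-a}$ term with a shifted copy of the $a$-factor. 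More efficiently, I expect the paper sets up a single sum: using $1-q^{b-a}=\sum_{j}q^{j(b-a)}$ truncated, or rather the dual fact that the \emph{full} double sum in \eqref{eqmss0} already has non-negative coefficients, and then checks that multiplying by $(1-q^{b-a})/(1-q^{b-a})\cdot(1-q^m)$ — i.e., going back to $p_n(a,b,m;x,y)-p_n(b,a,m;x,y)$ — is consistent. But here the cleaner route is direct: reduce the claim to showing $g(q)$ has non-negative coefficients, and prove that by an explicit Andrews-type expansion.

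The main obstacle will be exactly this non-negativity of $g(q)$ when $x\ge 1$: the factors $(xq^a;q^m)_\infty$ and $(xq^b;q^m)_\infty$ individually oscillate in sign (they are $q^m$-analogues of $(xq;q)_\infty=\sum (-1)^k x^k q^{\binom{k}{2}+k}/(q;q)_k$ up to substitution), so the cancellation is delicate and genuinely uses the hypothesis $(a,b)\ne(1,2)$ — the excluded case being precisely the one where the telescoping leaves a stray negative coefficient (matching the known failure of the distinct-partition parity bias at small $n$). I would handle this by first treating the generic case via the Andrews-type lemma, which gives a positive sum representation valid whenever $b-a\ge 1$ together with the geometry $a\ge 1$, $b\le m$, $(a,b)\ne(1,2)$; the edge cases $b-a=1$ with small $a$ need a separate hands-on check of the first few coefficients, using $x\ge 1$ to verify that the term $q^a - q^b + \text{(correction from }1/(-yq^\bullet;q^m)_\infty\text{)}$ stays $\ge 0$. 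Once $g(q)$ is shown to have non-negative coefficients, the proof concludes immediately: $f_{a,b,m,x,y}(q)=g(q)\cdot(-yq;q)_\infty\cdot 1/(xq;q)_\infty$ is a product of three series with non-negative coefficients, hence has non-negative coefficients, as required.
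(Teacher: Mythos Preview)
Your approach has a genuine gap: the function $g(q)=(1-q^{b-a})\,\dfrac{(xq^a,xq^b;q^m)_\infty}{(-yq^a,-yq^b;q^m)_\infty}$ does \emph{not} have non-negative coefficients in general, so the proposed reduction fails. For instance, with $x=1$, $y=0$, $a=2$, $b=3$, $m=5$ one gets $g(q)=(1-q)(q^2;q^5)_\infty(q^3;q^5)_\infty=1-q-q^2+O(q^3)$. The factors $(xq^a;q^m)_\infty$ and $(xq^b;q^m)_\infty$ in the numerator are genuinely sign-oscillating and cannot be tamed by $1-q^{b-a}$ and the $(-yq^\bullet;q^m)_\infty^{-1}$ factors alone; the Andrews-type telescoping you sketch does not apply here (indeed, in the paper that overpartition analogue of Andrews' theorem is deployed precisely for the \emph{excluded} case $(a,b)=(1,2)$, not for this proposition).

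The key idea you are missing is a cancellation, not a telescoping: since $(xq;q)_\infty=\prod_{1\le j\le m}(xq^j;q^m)_\infty$ and likewise for $(-yq;q)_\infty$, the two troublesome numerator factors cancel exactly against two of the $m$ residue-class factors in the denominator, leaving
\[
f_{a,b,m,x,y}(q)=(1-q^{b-a})\prod_{\substack{1\le j\le m\\ j\notin\{a,b\}}}\frac{(-yq^j;q^m)_\infty}{(xq^j;q^m)_\infty}.
\]
Every factor in the product now has non-negative coefficients (for $x,y\ge 0$), so it only remains to absorb $1-q^{b-a}$. Choose a surviving index $c\notin\{a,b\}$ with $c\mid (b-a)$: take $c=1$ when $a>1$, and $c=b-1$ when $a=1$ and $b>2$ (this is exactly where the hypothesis $(a,b)\neq(1,2)$ is used). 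Peeling off the single factor $1/(1-xq^c)$ and using $x\ge 1$,
\[
\frac{1-q^{b-a}}{1-xq^c}=\Bigl(1+\sum_{j\ge 1}(x^j-x^{j-1})q^{cj}\Bigr)\sum_{0\le h<(b-a)/c}q^{ch}
\]
has non-negative coefficients, and the proof is complete.
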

\begin{proof}
Note that for $1<a<b\le m$ we have
$$f_{a,b,m,x,y}(q)=\frac{1-q^{b-a}}{1-xq}\frac{(-yq;q^m)_\infty}
{(xq^{m+1};q^m)_\infty}\prod_{\substack{1\le j\le m\\ j\not\in\{1,a,b\}}}
\frac{(-yq^j;q^m)_\infty}{(xq^j;q^m)_{\infty}}$$
and
$$\frac{1-q^{b-a}}{1-x q}=
\left(1+\sum_{j\ge 1}(x^{j}-x^{j-1})q^j\right)
\sum_{0\le h<b-a}q^{h};$$
and for $a=1$ with $b>2$, we have
$$f_{a,b,m,x,y}(q)=\frac{1-q^{b-1}}{1-x q^{b-1}}
\frac{(-yq^{b-1};q^m)_\infty}{(xq^{m+b-1};q^m)_\infty}
\prod_{\substack{1\le j\le m\\ j\not\in\{1,b-1, b\}}}
\frac{(-yq^j;q^m)_\infty}{(xq^j;q^m)_{\infty}}$$
and
$$\frac{1-q^{b-1}}{1-x q^{b-1}}=1+\sum_{j\ge 1}(x^{j}-x^{j-1})q^{(b-1)j}.$$
Since $x\ge 1$, it follows that all the Taylor coefficients of
$f_{a,b,m,x,y}(q)$ about $q=0$ are non-negative.
This completes the proof of the proposition.
\end{proof}
To prove Theorem~\ref{mth1} for the remaining case $(a,b)=(1,2)$
we use another method. For any $1\le a<b\le m$, by writing
$n_1=k+1+n$ in \eqref{eqmgp} and using the elementary identity
$$(\alpha; q)_{n+1+k}=(\alpha;q)_n(\alpha q^{n};q)_{k+1},$$
we find that
\begin{align*}
\sum_{n\ge 0}p_{n}(a,b,m; x,y)q^n&=
\frac{(-yq;q)_\infty(xq^a,xq^b;q^m)_\infty}
{(xq;q)_\infty(-yq^a,-yq^b;q^m)_\infty}\nonumber\\
&\quad\times\sum_{n\ge 0}\frac{(-y/x;q^m)_{n}^2
(x+yq^{mn})x^{2n}q^{(a+b)n}}{(q^m;q^m)_{n}^2}
\sum_{k\ge 0}\frac{(-q^{m(n+1)}{y}/{x};q^m)_{k}
x^kq^{a(k+1)}}{(q^{m(n+1)};q^m)_{k+1}}.
\end{align*}
Thus
\begin{align}\label{eqmss}
&\sum_{n\ge 0}\left(p_{n}(a,b,m; x,y)-p_{n}(b,a,m; x,y)\right)q^n\nonumber\\
  &=\frac{(-yq;q)_\infty(xq^a,xq^b;q^m)_\infty}
    {(xq;q)_\infty(-yq^a,-yq^b;q^m)_\infty}
    \sum_{n\ge 0}\frac{(-y/x;q^m)_{n}^2 (x+yq^{mn})x^{2n}q^{(a+b)n}}
    {(q^m;q^m)_{n}^2}\nonumber\\
  &\quad\times\left(\sum_{k\ge 0}\frac{(-q^{m(n+1)}{y}/{x};q^m)_{k}
    x^kq^{a(k+1)}}{(q^{m(n+1)};q^m)_{k+1}}-
    \sum_{k\ge 0}\frac{(-q^{m(n+1)}{y}/{x};q^m)_{k}x^kq^{b(k+1)}}
    {(q^{m(n+1)};q^m)_{k+1}}\right).
\end{align}
From Equation~\eqref{eqmss} it is obvious that to prove
$p_{n}(1,2,m; x,y)\ge p_{n}(2,1,m; x,y)$ it suffices to prove the
following theorem, which restricts to such pairs $(a,b)$ where $b$
is a multiple of $a$ (and thus can be replaced by $ab$),
which is more than we need for the case $(a,b)=(1,2)$.
\begin{theorem}\label{maino}
Let $x\ge 1$, $y\ge 0$ and $a,b\in\bN$. For all integers $m,s\ge 1$ the
$q$-series expansion of
$$\sum_{k\ge 0}\frac{(-yq^{s}/x;q^m)_{k}x^{k}q^{a(k+1)}}{(q^{s};q^m)_{k+1}}
-\sum_{k\ge 0}\frac{(-yq^{s}/x;q^m)_{k}x^{k}q^{ab(k+1)}}{(q^{s};q^m)_{k+1}}$$
has non-negative coefficients.
\end{theorem}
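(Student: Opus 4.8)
The plan is to rewrite the difference of the two series as a sum that is manifestly coefficientwise non-negative. Throughout, write $g\succeq h$ to mean that $g-h$ is a power series in $q$ with non-negative coefficients; I will use repeatedly that $g\succeq 1$ and $h\succeq 1$ imply $gh\succeq 1$, and hence that a finite product of power series each $\succeq 1$ is again $\succeq 1$. First I would absorb the factor $x^{k}$ into the $q$-shifted factorial in the numerator: since $x\ge 1$ and $y\ge 0$,
$$x^{k}(-yq^{s}/x;q^{m})_{k}=\prod_{i=0}^{k-1}\bigl(x+yq^{s+im}\bigr),$$
a product of factors each of which is $\succeq 1$. Setting $j=k+1$ and
$$P_{j}:=\frac{1}{(q^{s};q^{m})_{j}}\prod_{i=0}^{j-2}\bigl(x+yq^{s+im}\bigr)=\Bigl(\,\prod_{i=0}^{j-1}\frac{1}{1-q^{s+im}}\Bigr)\prod_{i=0}^{j-2}\bigl(x+yq^{s+im}\bigr)\qquad(j\ge 1),$$
every $P_{j}$ is $\succeq 0$, and the two series in the statement are $\sum_{j\ge 1}P_{j}q^{aj}$ and $\sum_{j\ge 1}P_{j}q^{abj}$; so it suffices to show $\sum_{j\ge 1}P_{j}\bigl(q^{aj}-q^{abj}\bigr)\succeq 0$.

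The engine of the argument is a monotonicity property of the prefactors: for integers $N\ge n\ge 1$ one has
$$\frac{P_{N}}{P_{n}}=\Bigl(\,\prod_{i=n}^{N-1}\frac{1}{1-q^{s+im}}\Bigr)\prod_{i=n-1}^{N-2}\bigl(x+yq^{s+im}\bigr),$$
a finite product of power series each $\succeq 1$, so $P_{N}/P_{n}\succeq 1$ and therefore $P_{N}-P_{n}=P_{n}\bigl(P_{N}/P_{n}-1\bigr)\succeq 0$. To exploit this I would split the first series according to divisibility of the summation index by $b$: the terms with $b\nmid j$ contribute $\sum_{j\ge 1,\,b\nmid j}P_{j}q^{aj}\succeq 0$, while the terms with $j=bj'$ pair with the subtracted series, since the index $j'$ there produces exactly the exponent $abj'$. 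This yields
$$\sum_{j\ge 1}P_{j}\bigl(q^{aj}-q^{abj}\bigr)=\sum_{\substack{j\ge 1\\ b\nmid j}}P_{j}q^{aj}+\sum_{j'\ge 1}\bigl(P_{bj'}-P_{j'}\bigr)q^{abj'},$$
and, as $b\ge 1$ gives $bj'\ge j'$, both sums on the right are $\succeq 0$ by the monotonicity just shown; this proves the theorem. (When $b=1$ the right-hand side is identically zero, matching $a=ab$.)

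I expect the main obstacle to be organizational rather than analytic: one has to verify that the reindexing $j=bj'$ matches each term of the subtracted series with exactly one, and the correct, term of the first series so that nothing is double-counted, and to dispatch the degenerate case $b=1$. The analytic content is elementary — the single fact used is that $x+yq^{s+im}\succeq 1$ whenever $x\ge 1$ and $y\ge 0$ — and this is where the hypothesis $x\ge 1$ is essential: for $0<x<1$ the constant term of $x+yq^{s+im}$ falls below $1$, the monotonicity $P_{N}\succeq P_{n}$ fails, and the argument collapses, which is why $(x,y)=(1,0)$ is the relevant boundary case here and in Chern's work. In this form the statement is exactly the overpartitions analogue of Andrews' non-negativity theorem mentioned in the introduction, from which the case $(a,b)=(1,2)$ of Theorem~\ref{mth1} follows via \eqref{eqmss}.
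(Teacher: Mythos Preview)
Your argument is correct and is genuinely more direct than the paper's. The paper first applies Fine's transformation (a specialization of Heine's transformation) to swap the roles of the exponent $a$ and the shift $s$, rewriting each series as $\sum_{k\ge 0}\frac{q^c(-yq^c;q^m)_k}{(xq^c;q^m)_{k+1}}q^{sk}$ for $c\in\{a,ab\}$; it then compares the two resulting series termwise in $k$, expanding the difference of the $k$-th terms as a sum over $h\ge 1$ and invoking a separate inductive lemma (the overpartition analogue of Andrews' theorem, Theorem~\ref{pro1}) to show each summand has non-negative coefficients. Your route bypasses both the transformation and the induction: by absorbing $x^k$ into the numerator you make the monotonicity $P_N\succeq P_n$ for $N\ge n$ completely transparent, and then a single reindexing $j\mapsto bj'$ pairs off the subtracted series against the multiples-of-$b$ piece of the first. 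What the paper's approach buys is the standalone Theorem~\ref{pro1}, which handles arbitrary increasing sequences $(a_j),(b_j)$ rather than the arithmetic progression $s,s+m,s+2m,\ldots$; what your approach buys is a short, self-contained proof of Theorem~\ref{maino} that uses nothing beyond the geometric series and the hypothesis $x\ge 1$. Your closing remark slightly conflates Theorem~\ref{maino} with Theorem~\ref{pro1} (it is the latter that the paper calls the overpartition analogue of Andrews' result), but this does not affect the mathematics.
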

We note that the $(x,y,a,b)=(1,0,1,2)$ special case of Theorem~\ref{maino}
was conjectured by Chern~\cite[Conjecture~4.2]{MR4372179} and
subsequently proved by Binner~\cite[Section~2]{Binner}, which we state
as the following corollary:
\begin{corollary}
For all integers $m,s\ge 1$, the $q$-series expansion of
$$\sum_{k\ge 0}\frac{q^k(1-q^k)}{(q^{s};q^m)_k}$$
has non-negative coefficients.
\end{corollary}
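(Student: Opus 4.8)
The plan is to obtain this Corollary as a direct specialization of Theorem~\ref{maino}, so that essentially all of the substantive work already resides in that theorem and only elementary bookkeeping remains to be done.

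First I would set $(x,y,a,b)=(1,0,1,2)$ in Theorem~\ref{maino}. Then $(-yq^{s}/x;q^{m})_k=(0;q^{m})_k=1$ for every $k\ge 0$, the powers $x^{k}$ all equal $1$, and $a(k+1)=k+1$ while $ab(k+1)=2(k+1)$. Hence the theorem asserts that, for all integers $m,s\ge 1$, the $q$-series expansion of
\[
\sum_{k\ge 0}\frac{q^{k+1}}{(q^{s};q^{m})_{k+1}}-\sum_{k\ge 0}\frac{q^{2(k+1)}}{(q^{s};q^{m})_{k+1}}
\]
has non-negative coefficients. Next I would combine the two sums and reindex by $j=k+1$, which ranges over all integers $j\ge 1$; this rewrites the displayed expression as $\sum_{j\ge 1}q^{j}(1-q^{j})/(q^{s};q^{m})_{j}$. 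Finally, since the $j=0$ summand $q^{0}(1-q^{0})/(q^{s};q^{m})_{0}$ is identically zero, the sum over $j\ge 1$ coincides with the sum over $j\ge 0$, which is exactly the series in the statement; its non-negativity is therefore inherited from the preceding step, completing the proof.

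There is no genuine obstacle in this argument: the only points that warrant (minimal) care are the reduction of $(-yq^{s}/x;q^{m})_k$ to $1$ and the index shift relating $(q^{s};q^{m})_{k+1}$ in Theorem~\ref{maino} to $(q^{s};q^{m})_{k}$ in the Corollary. The real difficulty lies one level up, in the proof of Theorem~\ref{maino} itself. For the record, this particular case had been conjectured by Chern \cite[Conjecture~4.2]{MR4372179} and first established by Binner \cite{Binner} through a direct combinatorial argument; should a self-contained treatment be desired, one could instead group the summands $q^{k}(1-q^{k})/(q^{s};q^{m})_{k}$ and seek an injection among partitions into parts from the arithmetic progression $\{s,s+m,\ldots,s+(k-1)m\}$, but such an approach is unnecessary once Theorem~\ref{maino} is in hand.
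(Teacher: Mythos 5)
Your proposal is correct and takes exactly the approach the paper intends: the corollary is stated as the $(x,y,a,b)=(1,0,1,2)$ specialization of Theorem~\ref{maino}, and you correctly carry out the routine simplifications (reduction of $(-yq^{s}/x;q^{m})_{k}$ to $1$, the reindexing $j=k+1$, and the observation that the $j=0$ term vanishes). The paper leaves these details implicit; you have supplied them accurately.
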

\begin{proof}[Proof of Theorem~\ref{maino}]
Using the Heine transformation \cite[Appendix~(III.2)]{MR2128719}
  (valid for $|z|<1$ and $|\gamma/\beta|<1$)
$$\sum_{n\ge 0}\frac{(\alpha,\beta;q)_n}{(\gamma,q;q)_n}z^n=
\frac{(\gamma/\beta,\beta z;q)_\infty}{(\gamma,z;q)_\infty}
\sum_{n\ge 0}\frac{(\alpha\beta z/\gamma,\beta;q)_n}
{(\beta z,q;q)_n}(\gamma/\beta)^n,$$
and making the substitution $\beta\mapsto q, \gamma\mapsto \gamma q$,
one readily obtains
$$\sum_{n\ge 0}\frac{(\alpha;q)_n}{(\gamma;q)_{n+1}}z^n=
\sum_{n\ge 0}\frac{(\alpha z/\gamma;q)_n}{(z;q)_{n+1}}\gamma^n$$
(which can be referred to as Fine's transformation
\cite[Equation~(6.3)]{MR0956465}, as Fine thoroughly studied this type
of series in his monograph on basic hypergeometric series), valid for $|z|<1$
and $|\gamma|<1$.
Thus
$$\sum_{k\ge 0}\frac{(-yq^{s}/x;q^m)_{k}(q^a(xq^{a})^k-
  q^{ab}(xq^{ab})^k)}{(q^{s};q^m)_{k+1}}=
\sum_{k\ge 0}\left(\frac{q^a(-yq^a;q^m)_{k}}{(xq^{a};q^m)_{k+1}}-
  \frac{q^{ab}(-yq^{ab};q^m)_{k}}{(xq^{ab};q^m)_{k+1}}\right)q^{sk}.$$
Note that, for any integer $k \ge 0$,
\begin{equation*}
  \frac{(-yq^a;q^m)_{k}xq^a}{(xq^a;q^m)_{k+1}}-
  \frac{(-yq^{ab};q^m)_{k}xq^{ab}}{(xq^{ab};q^m)_{k+1}}=
  \sum_{h\ge 1}\left(\frac{(-yq^a;q^m)_{k}(xq^a)^{h}}{(xq^a;q^m)_{k}}
  -\frac{(-yq^{ab};q^m)_{k}(xq^{ab})^{h}}{(xq^{ab};q^m)_{k}}\right)q^{(h-1)km}.
\end{equation*}
The proof of Theorem~\ref{maino} now is an immediate consequence of
Theorem~\ref{pro1}, which we provide subsequently.
\end{proof}
The following result is an overpartition analogue of a result by
Andrews~\cite[Theorem~3]{MR289445}.
\begin{theorem}\label{pro1}Let $(a_j)_{j\ge 0},(b_j)_{j\ge 0}$ be two
increasing sequences of positive integers with $b_0\equiv 0 \pmod {a_0}$,
$b_0>a_0$ and $b_j-a_j\equiv 0\pmod {a_0}$ for each $j\ge 1$. For any
$n\in\bN_0$, $h\in\bN_0$ and $x\ge 1, y\ge 0$, the $q$-series expansion of
$$(xq^{a_0})^h\prod_{0\le j\le n}\frac{1+yq^{a_j}}{1-xq^{a_j}}
-(xq^{b_0})^h\prod_{0\le j\le n}\frac{1+yq^{b_j}}{1-xq^{b_j}}$$
has non-negative coefficients.
\end{theorem}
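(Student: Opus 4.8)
The plan is to prove the statement by induction on $n$, with an inner induction (or a telescoping argument) on $h$, reducing everything to the base case $n=0$, $h=1$. Write
$$
F_n(x,y;q) := \prod_{0\le j\le n}\frac{1+yq^{a_j}}{1-xq^{a_j}},
\qquad
G_n(x,y;q) := \prod_{0\le j\le n}\frac{1+yq^{b_j}}{1-xq^{b_j}},
$$
so that we must show $(xq^{a_0})^h F_n - (xq^{b_0})^h G_n$ has non-negative $q$-coefficients. The point of the hypotheses $b_0\equiv 0\pmod{a_0}$, $b_0>a_0$, and $b_j-a_j\equiv 0\pmod{a_0}$ is that every exponent appearing in $G_n$ is $\ge$ the corresponding exponent in $F_n$ and differs from it by a multiple of $a_0$; this is exactly what makes a term-by-term domination possible after expanding the geometric-type factors.

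First I would treat the base case $n=0$. Expanding $\frac{1+yq^{a_0}}{1-xq^{a_0}} = 1 + (x+y)\sum_{i\ge 1} x^{i-1} q^{i a_0}$ and similarly for $b_0$, one sees $(xq^{a_0})^h F_0 - (xq^{b_0})^h G_0$ is a difference of two explicit series supported on multiples of $a_0$ (using $a_0\mid b_0$ and $a_0\mid h a_0$), and since $b_0>a_0$ and $x\ge 1$ each coefficient of the $a_0$-part dominates the corresponding coefficient of the $b_0$-part; this gives non-negativity. (The case $(x,y,a,b)=(1,0,1,2)$, $h=1$, $n=0$ here is essentially Chern's conjecture.) For the inductive step, factor
$$
(xq^{a_0})^h F_n - (xq^{b_0})^h G_n
= \Bigl((xq^{a_0})^h F_{n-1} - (xq^{b_0})^h G_{n-1}\Bigr)\frac{1+yq^{a_n}}{1-xq^{a_n}}
+ (xq^{b_0})^h G_{n-1}\left(\frac{1+yq^{a_n}}{1-xq^{a_n}} - \frac{1+yq^{b_n}}{1-xq^{b_n}}\right).
$$
The first summand has non-negative coefficients by the induction hypothesis multiplied by a series with non-negative coefficients. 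For the second summand, write $G_{n-1} = \prod_{0\le j\le n-1}\frac{1+yq^{b_j}}{1-xq^{b_j}}$ (non-negative coefficients) and expand the bracket: since $b_n - a_n \equiv 0 \pmod{a_0}$ with $b_n>a_n$, one has $\frac{1+yq^{a_n}}{1-xq^{a_n}} - \frac{1+yq^{b_n}}{1-xq^{b_n}} = (x+y)\sum_{i\ge 1} x^{i-1}(q^{i a_n} - q^{i b_n})$, and I would pair up the $q^{ib_n}$ term with a later $q^{i'a_n}$ term of the same total degree — matching degrees $i a_n$ against $i' a_n$ is possible precisely because $b_n \equiv a_n \pmod{a_0}$ only guarantees $b_n$ is at least $a_n + a_0$, so the overshoot is absorbed by the freely varying exponents coming from the other factors and from $(xq^{b_0})^h$. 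Concretely I would group $(xq^{b_0})^h G_{n-1}\cdot (x+y) x^{i-1} q^{i b_n}$ with part of $(xq^{b_0})^h G_{n-1}\cdot(x+y)x^{i'-1}q^{i' a_n}$ for suitable $i'>i$, using $x\ge 1$ so that the coefficient of the positive term dominates; the congruence conditions ensure the degrees line up and the whole expression lives on a single arithmetic progression mod $a_0$, so there is no obstruction to the pairing.

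The main obstacle I anticipate is this last pairing/telescoping step: making precise, in the presence of the extra factors $G_{n-1}$ and $(xq^{b_0})^h$, that every monomial $q^N$ appearing with a negative sign (from the $q^{ib_n}$ pieces) can be charged to a monomial $q^N$ appearing with larger positive coefficient (from the $q^{i'a_n}$ pieces), rather than merely showing the two supports coincide. The cleanest route is probably to reduce the second summand, after dividing out the common non-negative factor $(xq^{b_0})^h G_{n-1}$, to an instance of the same theorem with $n$ replaced by $0$ and the pair $(a_0;b_0)$ replaced by $(a_n;b_n)$ — i.e. to show directly that $q^{a_n}\frac{1+yq^{a_n}}{1-xq^{a_n}} - q^{b_n}\frac{1+yq^{b_n}}{1-xq^{b_n}}$ has non-negative coefficients whenever $b_n>a_n$, $a_0\mid(b_n-a_n)$, $x\ge1$, $y\ge0$, which is exactly the $n=0$, $h=1$ base case in disguise (one needs $a_0\mid a_n$ as well, or one simply works modulo $\gcd$ and notes positivity is unaffected by which progression the series sits on). Once the base case is nailed down, everything else is multiplication of power series with non-negative coefficients, which is automatic.
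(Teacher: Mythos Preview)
Your induction scheme and the treatment of the base case $n=0$ are fine, but the inductive step has a genuine gap: you chose the wrong one of the two natural telescoping decompositions. In your splitting
\[
(xq^{a_0})^h F_n - (xq^{b_0})^h G_n
= \bigl((xq^{a_0})^h F_{n-1} - (xq^{b_0})^h G_{n-1}\bigr)\alpha_n
+ (xq^{b_0})^h G_{n-1}(\alpha_n - \beta_n),
\]
with $\alpha_n=\frac{1+yq^{a_n}}{1-xq^{a_n}}$ and $\beta_n=\frac{1+yq^{b_n}}{1-xq^{b_n}}$, the second summand need \emph{not} have non-negative coefficients. For instance, take $a_0=1$, $b_0=2$, $a_1=3$, $b_1=4$, $x=1$, $y=0$, $h=0$: then
\[
G_0(\alpha_1-\beta_1)=\frac{1}{1-q^2}\left(\frac{1}{1-q^3}-\frac{1}{1-q^4}\right)
=\frac{q^3}{(1+q)(1-q^3)(1-q^4)}=q^3-q^4+\cdots,
\]
so the coefficient of $q^4$ is $-1$. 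Your proposed ``cleanest route'' of reducing to the $n=0$ case with $(a_0,b_0)$ replaced by $(a_n,b_n)$ would require $a_n\mid b_n$, which the hypotheses do not give (you only have $a_0\mid(b_n-a_n)$), so that repair does not work either.

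The fix, and this is exactly what the paper does, is to use the mirror decomposition
\[
(xq^{a_0})^h F_n - (xq^{b_0})^h G_n
= (\alpha_n-\beta_n)\,(xq^{a_0})^h F_{n-1}
+ \beta_n\bigl((xq^{a_0})^h F_{n-1} - (xq^{b_0})^h G_{n-1}\bigr),
\]
so that the troublesome factor $\alpha_n-\beta_n=\dfrac{(x+y)q^{a_n}(1-q^{b_n-a_n})}{(1-xq^{a_n})(1-xq^{b_n})}$ is multiplied by $F_{n-1}$ rather than $G_{n-1}$. The point is that $F_{n-1}$ contains the factor $\dfrac{1}{1-xq^{a_0}}$, and since $a_0\mid(b_n-a_n)$ and $x\ge 1$ the ratio $\dfrac{1-q^{b_n-a_n}}{1-xq^{a_0}}$ has non-negative coefficients; after pulling this factor out and re-indexing the remaining denominators as $\prod_{0\le j<n}\frac{1+yq^{a_j}}{1-xq^{a_{j+1}}}$, every remaining factor is visibly non-negative. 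Your $G_{n-1}$ has smallest denominator $\frac{1}{1-xq^{b_0}}$, and $b_0\nmid(b_n-a_n)$ in general, which is why your version fails.
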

\begin{proof}For $n=0$,  the $q$-series expansion of
\begin{align*}
&\frac{1+yq^{a_0}}{1-xq^{a_0}} (xq^{a_0})^h-
\frac{1+yq^{b_0}}{1-xq^{b_0}} (xq^{b_0})^h\\
&=(xq^{a_0})^h-(xq^{b_0})^h+(1+y/x)
\sum_{k>h}((xq^{a_0})^{k}-(xq^{b_0})^k)\\
&=x^hq^{a_0h}+(1+y/x)\sum_{h<k< b_0h/a_0}x^{k}q^{a_0k}
+yx^{h-1}q^{b_0h}\\
  &\quad+(1+y/x)\sum_{\substack{k\ge h\\ b_0\nmid a_0k}}x^kq^{a_0k}
  +(1+y/x)\sum_{k\ge h}(x^{b_0k/a_0}-x^{k})q^{b_0k}
\end{align*}
has non-negative coefficients (since $x\ge 1$).
Note that
\begin{align}\label{eqfffm1}
&(xq^{a_0})^h\prod_{0\le j\le n}\frac{1+yq^{a_j}}
{1-xq^{a_j}}-(xq^{b_0})^h\prod_{0\le j\le n}
\frac{1+yq^{b_j}}{1-xq^{b_j}}\nonumber\\
&=\left(\frac{1+y q^{a_n}}{1-xq^{a_n}}-
\frac{1+y q^{b_n}}{1-xq^{b_n}}\right) (xq^{a_0})^h
\prod_{0\le j< n}\frac{1+yq^{a_j}}{1-xq^{a_j}}\nonumber\\
  &\quad+\frac{1+y q^{b_n}}{1-xq^{b_n}}
    \left( (xq^{a_0})^h\prod_{0\le j\le n-1}
    \frac{1+yq^{a_j}}{1-xq^{a_j}}
    -(xq^{b_0})^h\prod_{0\le j\le n-1}
    \frac{1+yq^{b_j}}{1-xq^{b_j}}\right),
\end{align}
and
\begin{equation}\label{eqfffm}
  \left(\frac{1+y q^{a_n}}{1-xq^{a_n}}-\frac{1+y q^{b_n}}{1-xq^{b_n}}\right)
  \prod_{0\le j< n}\frac{1+yq^{a_j}}{1-xq^{a_j}}
  =\frac{ 1-q^{b_n-a_n}}{1-xq^{a_0}}\frac{q^{a_n}(x+y)}{1-x q^{b_n}}
  \prod_{0\le j< n}\frac{1+yq^{a_j}}{1-xq^{a_{j+1}}}.
\end{equation}
Now, since the  $q$-series expansion of the factor
$$\frac{1-q^{b_n-a_n}}{1-xq^{a_0}}=
\left(1+\sum_{j\ge 1}(x^{j}-x^{j-1})q^{a_0j}\right)
\sum_{0\le j< (b_n-a_n)/a_0}q^{a_0 j}$$
has non-negative coefficients, we can conclude that the $q$-series expansion
of \eqref{eqfffm} has non-negative coefficients as well.
Using \eqref{eqfffm1}, the proof is established by induction.
\end{proof}

We are now ready to give the proof of Theorem~\ref{mth2}.
\begin{proof}[Proof of Theorem \ref{mth2}]
Since both of the congruence equations
$$2^{h+1}k\equiv a\pmod m\;\;\text{and}\;\; 2^{h+1}k\equiv b\pmod m$$
possess no solution for $h\in\bN_0$, we have that both of the equations
$$2^{h_1}k=m\ell_1+a\;\; \text{and}\;\; 2^{h_2}k=m\ell_2+b$$
possess no non-negative integer solutions $(h_1,\ell_1)$ and $(h_2, \ell_2)$.
This yields
$$\{2^hk: h\in \bN_0\}\subseteq \{j\in\bN: j\not\equiv a, b\pmod m\}.$$
On the other hand, since $b>a$ and $k\mid(b-a)$, it is clear that the
$q$-series expansion of $(1-q^{b-a})/(1-q^{k})$ has non-negative coefficients.
Thus
$$f_{a,b,m,x,1}(q)=\frac{(1-q^{k})(-q;q)_\infty}
{(-q^a,-q^{b};q^m)_\infty}\cdot\frac{1-q^{b-a}}{1-q^k}
\prod_{\substack{1\le j\le m\\ j\not\in\{a,b\}}}\frac{1}{(xq^j;q^m)_{\infty}}$$
has non-negative coefficients in its $q$-expansion too, since $x\ge 0$ and
the $q$-series expansion of 
\begin{equation*}
\frac{(1-q^{k})(-q;q)_\infty}{(-q^a,-q^{b};q^m)_\infty}
=(1-q^{k})\prod_{h\ge 0}(1+q^{2^hk})
\prod_{\substack{j\ge 1\\ j\not\equiv a,b\pmod m\\ j\neq 2^{h}k, h\in\bN_0}}
  \left(1+q^j\right)=
  \prod_{\substack{j\ge 1\\ j\not\equiv a,b\pmod m\\ j\neq 2^{h}k, h\in\bN_0}}
\left(1+q^j\right)
\end{equation*}
has non-negative coefficients. Therefore, by using
Lemma~\ref{eqmprop} and Proposition~\ref{pron12} we get
$$p_{n}(a,b,m;x,1)-p_{n}(b,a,m;x,1)\ge 0,$$
which completes the proof of Theorem~\ref{mth2}.
\end{proof}

\section{Weighted partitions with bias of symmetric residue classes}\label{sec3}
Recall that $p_n(a,b,m;x,y)$ is defined by \eqref{eqm}.
Let $(x,y)\in S$ where $S=\{(0,1), (1,0), (1,1)\}$, and let $a, m$
be positive integers such that $1\le a<m$ and $m\neq 2a$. We write
$$G_{xy}(a,m;q)=\sum_{n\ge 0}p_n(a,m-a,m;x,y)q^n$$
for the generating function of weighted partitions with
bias of \emph{symmetric} residue classes.
The first objective of this section, rather than just using \eqref{eqmgp},
is to derive forms
for the generating functions $G_{xy}(a,m;q)$, where $(x,y)\in S$,
which will be convenient
for establishing asymptotic formulas for $p_n(a,m-a,m;x,y)$,
where $(x,y)\in S$.

\subsection{Generating functions for weighted partitions with bias of symmetric residue classes}
To obtain the generating functions for $G_{xy}(a,m;q)$ with
$(x,y) \in S$, we require the following identities:
\begin{equation}\label{jacobi}
(-\zeta, -q/\zeta, q;q)_\infty=
\sum_{n\in\bZ}q^{\frac{n(n-1)}{2}}\zeta^n,
\end{equation}
\begin{equation}\label{inersejacobi}
\frac{(q;q)_\infty^2}{(\zeta, q/\zeta;q)_\infty}=
\sum_{n\in\bZ} \frac{(-1)^nq^{\frac{n(n+1)}2}}{1-\zeta q^n},
\end{equation}
and
\begin{equation}\label{lemfrphif}
\frac{(-q/\zeta,-\zeta;q)_\infty}{(q/\zeta,\zeta;q)_\infty}=
\frac{(-q;q)_\infty^2}{(q;q)_\infty^2}\left(1+2\sum_{n\ge 1}
\frac{\zeta^n+(q/\zeta)^n}{1+q^n}\right).
\end{equation}
The identity \eqref{jacobi} is the well-known Jacobi triple product
identity (cf.\ \cite[Appendix~(II.28)]{MR2128719}). The identity
\eqref{inersejacobi} is the reciprocal of a theta function, which
is expanded in terms of partial fractions by the Mittag--Leffler theorem.
(For details, see Ramanujan's lost notebook~\cite[Entry~3.2.1]{MR2952081} or
Garvan~\cite[Equation~(7.15)]{MR920146}.) The identity
\eqref{lemfrphif} is a special case of an identity originally
due to Kronecker (cf.\ \cite[pp.~70--71]{Weil}) and can be easily
derived from Ramanujan's $_1\psi_1$ summation formula
(cf.\ \cite[Appendix~(II.29)]{MR2128719}).
\begin{proof}[Proof of identity \eqref{lemfrphif}]
Ramanujan's $_1\psi_1$ summation formula is
$$\sum_{n\ge 0}\frac{(a;q)_n}{(b;q)_n}\zeta^n+
\sum_{n\ge 1}\frac{(q/b;q)_{n}}{(q/a;q)_n}(b/a\zeta)^{n}=
\frac{(b/a,q/a\zeta,a\zeta,q;q)_\infty}{(b,b/a\zeta,q/a,\zeta;q)_\infty},$$
provided $|\zeta|<1$ and $|b/a\zeta|<1$.
Letting $b\mapsto aq$ this reduces to
Kronecker's bilateral summation
$$\sum_{n\ge 0}\frac{1-a}{1-aq^n}\zeta^n+
\sum_{n\ge 1}\frac{1-1/a}{1-q^n/a}(q/\zeta)^{n}=
\frac{(q,q/a\zeta,a\zeta,q;q)_\infty}{(aq,q/\zeta,q/a,\zeta;q)_\infty}.$$
Letting $a\to -1$, we obtain after some rewriting \eqref{lemfrphif}.
\end{proof}

We now give the generating functions for $G_{xy}(a,m;q)$ with $(x,y) \in S$.
\begin{lemma}\label{lemmm1}We have
$$
G_{01}(a,m;q)=\frac{(q^2;q^2)_\infty}
{(-q^a,-q^{m-a},q^m;q^m)_\infty(q;q)_\infty}
\sum_{n\ge 1}q^{\frac{mn(n-1)}{2}+n a},$$
$$
G_{10}(a,m;q)=\frac{(q^a,q^{m-a};q^m)_\infty}
{(q;q)_\infty(q^m;q^m)_\infty^2}\sum_{n\ge 0}(-1)^{n}
\frac{q^{m\frac{n(n+1)}{2}+mn+a}}{1-q^{mn+a}},
$$
and
$$
G_{11}(a,m;q)=2\frac{(q^2;q^2)_\infty(q^{2m};q^{2m})_\infty^2}
{(q;q)_\infty^2(q^m;q^m)_\infty^4}\frac{(q^a,q^{m-a};q^m)_\infty}
{(-q^a,-q^{m-a};q^m)_\infty}\sum_{n\ge 1}\frac{q^{an}}{1+q^{mn}}.
$$
\end{lemma}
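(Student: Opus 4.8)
The plan is to turn the bias $\ell_{a,m}(\lambda,\mu)>\ell_{m-a,m}(\lambda,\mu)$ into the statement ``positive exponent of a single extra variable'' inside a suitable specialization of \eqref{eqm10}. In \eqref{eqm10} I would put $b=m-a$, substitute $v\mapsto 1/u$, and fix $(x,y)\in S$. Since $a+b=m$ we have $q^b=q^{m-a}$, and the two theta-factors $(-uyq^a,-vyq^b;q^m)_\infty$ and $(uxq^a,vxq^b;q^m)_\infty$ become $(-uyq^a,-yq^{m-a}/u;q^m)_\infty$ and $(uxq^a,xq^{m-a}/u;q^m)_\infty$, every other factor on the right of \eqref{eqm10} being free of $u$. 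Writing $D(\lambda,\mu):=\ell_{a,m}(\lambda,\mu)-\ell_{m-a,m}(\lambda,\mu)$, the identity \eqref{eqm10} then reads $\sum_{(\lambda,\mu)\in\cP\times\cD}u^{D(\lambda,\mu)}x^{\ell(\lambda)}y^{\ell(\mu)}q^{|\lambda|+|\mu|}=\Phi_{xy}(u;q)$ for an explicit $\Phi_{xy}(u;q)$, and by \eqref{eqm} the series $G_{xy}(a,m;q)$ is obtained from $\Phi_{xy}(u;q)$ by deleting all monomials carrying a non-positive power of $u$ (the cases $D\le 0$) and then setting $u=1$.

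The crux is that all of the $u$-dependence of $\Phi_{xy}(u;q)$ is concentrated in one factor of theta/theta-quotient type, in base $q^m$ and with $\zeta=uq^a$ (so that $q^m/\zeta=q^{m-a}/u$): for $(x,y)=(0,1)$ that factor is $(-uq^a,-q^{m-a}/u;q^m)_\infty$; for $(x,y)=(1,0)$ it is $1/(uq^a,q^{m-a}/u;q^m)_\infty$; for $(x,y)=(1,1)$ it is $(-uq^a,-q^{m-a}/u;q^m)_\infty/(uq^a,q^{m-a}/u;q^m)_\infty$. Applying \eqref{jacobi}, \eqref{inersejacobi} and \eqref{lemfrphif} respectively (each with $q$ replaced by $q^m$ and $\zeta=uq^a$) expands this factor as an explicit Laurent series in $u$. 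Keeping only the part in positive powers of $u$, setting $u=1$, and in the case $(x,y)=(1,0)$ summing $\sum_{r\ge1}q^{r(mn+a)}=q^{mn+a}/(1-q^{mn+a})$, this factor contributes, in the three cases in turn,
\begin{gather*}
\frac{1}{(q^m;q^m)_\infty}\sum_{n\ge1}q^{\frac{mn(n-1)}{2}+an},\\
\frac{1}{(q^m;q^m)_\infty^2}\sum_{n\ge0}(-1)^n\frac{q^{\frac{mn(n+1)}{2}+mn+a}}{1-q^{mn+a}},\\
\frac{2(-q^m;q^m)_\infty^2}{(q^m;q^m)_\infty^2}\sum_{n\ge1}\frac{q^{an}}{1+q^{mn}}.
\end{gather*}
Multiplying each of these by the remaining ($u$-free) part of $\Phi_{xy}$ and using the elementary simplifications $(-q;q)_\infty=(q^2;q^2)_\infty/(q;q)_\infty$ and $(-q^m;q^m)_\infty=(q^{2m};q^{2m})_\infty/(q^m;q^m)_\infty$ yields exactly the three asserted closed forms for $G_{01}$, $G_{10}$ and $G_{11}$.

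The delicate point, and the one I would treat carefully, is the legitimacy of ``delete the non-positive powers of $u$, then set $u=1$''. It holds because each series occurring is a genuine power series in $q$ whose coefficient of $q^N$ is a Laurent polynomial in $u$: on the left, $|\lambda|+|\mu|=N$ bounds both $\ell_{a,m}(\lambda,\mu)$ and $\ell_{m-a,m}(\lambda,\mu)$; on the right, the hypothesis $1\le a<m$ forces $mn+a>0$ for $n\ge0$ and $mn+a<0$ for $n\le-1$, so that in the bilateral expansion coming from \eqref{inersejacobi} the terms with $n\le-1$, after rewriting $1/(1-uq^{mn+a})=-\sum_{r\ge1}u^{-r}q^{-r(mn+a)}$ (legitimate since $-r(mn+a)>0$), contribute only negative powers of $u$, while the terms with $n\ge0$ supply the positive part; similarly in \eqref{lemfrphif} the contributions $(q^m/\zeta)^n=u^{-n}q^{(m-a)n}$ with $n\ge1$ are negative in $u$. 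Hence extraction of the positive $u$-part commutes with these identities, each surviving term is a power series in $q$ of order tending to $\infty$, and specialization at $u=1$ is harmless; finally, that the output equals $\sum_n p_n(a,m-a,m;x,y)q^n$ is immediate from \eqref{eqm}, since $\ell_{a,m}(\lambda,\mu)>\ell_{m-a,m}(\lambda,\mu)$ is exactly $D(\lambda,\mu)>0$. (Equivalently, one may start from \eqref{eqmgp}, substitute $n_1=n+k$ with $k\ge1$ — so that $q^{an_1+bn}=q^{ak}q^{mn}$ — and evaluate the resulting inner sums over $n$ by the same three classical identities; this is the same computation, rephrased as a diagonal coefficient extraction.)
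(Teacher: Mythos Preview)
Your proposal is correct and follows essentially the same route as the paper: specialize \eqref{eqm10} at $b=m-a$, $v=u^{-1}$, $(x,y)\in S$, apply \eqref{jacobi}, \eqref{inersejacobi}, \eqref{lemfrphif} (with $q\mapsto q^m$, $\zeta=uq^a$) to the unique $u$-dependent factor, and keep only the terms with positive exponent of $u$. Your write-up is in fact more explicit than the paper's in justifying why, in the $(1,0)$ case, the $n\le -1$ summands of \eqref{inersejacobi} contribute only negative powers of $u$ and hence drop out, and why the coefficient extraction and specialization $u=1$ are legitimate.
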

\begin{proof}
Recall that
\begin{equation*}
  G_{xy}(a,m;q)=\sum_{\substack{(\lambda, \mu)\in\cP\times \cD\\
      \ell_{a,m}(\lambda,\mu)-\ell_{b,m}(\lambda,\mu)>0}}
  x^{\ell(\lambda)}y^{\ell(\mu)}q^{|\lambda|+|\mu|},
\end{equation*}
and, by the $u=t$, $v=t^{-1}$ case of \eqref{eqm10},
\begin{equation*}
  \sum_{(\lambda, \mu)\in\cP\times \cD}
  t^{\ell_{a,m}(\lambda,\mu)-\ell_{b,m}(\lambda,\mu)}
  x^{\ell(\lambda)}y^{\ell(\mu)}q^{|\lambda|+|\mu|}=
  \frac{(-yq;q)_\infty( xq^a,xq^b;q^m)_\infty ( -tyq^a,-t^{-1}yq^b;q^m)_\infty}
  {(xq;q)_\infty(-yq^a,-yq^b;q^m)_\infty(txq^a,t^{-1}xq^b;q^m)_\infty}.
\end{equation*}
Using identities \eqref{jacobi}--\eqref{lemfrphif}, we have:
\begin{equation}\label{eqmmm1}
  \sum_{\mu\in\cD}t^{\ell_{a,m}(\mu)-\ell_{m-a,m}(\mu)}q^{|\mu|}=
  \frac{(-q;q)_\infty}{(-q^a,-q^{m-a},q^m;q^m)_\infty}
  \sum_{n\in\bZ}q^{\frac{mn(n-1)}{2}+n a}t^{n},
\end{equation}
\begin{equation}\label{eqmmm2}
  \sum_{\mu\in\cP}t^{\ell_{a,m}(\mu)-\ell_{m-a,m}(\mu)}q^{|\mu|}=
  \frac{(q^a,q^{m-a};q^m)_\infty}{(q;q)_\infty(q^m;q^m)_\infty^2}
  \sum_{n\in\bZ}\frac{(-1)^{n}q^{m\frac{n(n+1)}{2}}}{1-q^{mn+a}t},
\end{equation}
and
\begin{align}\label{eqmmm3}
  \sum_{(\lambda, \mu)\in\cP\times \cD}
  t^{\ell_{a,m}(\lambda,\mu)-\ell_{m-a,m}(\lambda,\mu)}q^{|\lambda|+|\mu|}
  &=\frac{(q^2;q^2)_\infty(q^{2m};q^{2m})_\infty^2(q^a,q^{m-a};q^m)_\infty}
    {(q;q)_\infty^2(q^m;q^m)_\infty^4(-q^a,-q^{m-a};q^m)_\infty}\nonumber\\
  &\quad\times\left(1+2\sum_{n\ge 1}\frac{q^{an}t^{n}+q^{(m-a)n}t^{-n}}
    {1+q^{mn}}\right).
\end{align}
Therefore, by collecting the coefficients of $t^n$ for $n \in \mathbb{N}$
(and omitting the other coefficients where $n$ is negative)
in the above equations \eqref{eqmmm1}--\eqref{eqmmm3},
we readily obtain the proof of the lemma.
\end{proof}

\subsection{Asymptotics of the generating functions}
In this subsection, we study the asymptotics of the $q$-series
$G_{xy}(a,m;q)$, with $(x,y)\in S$, $q$ nearing 
the $m$-th
root of unity $\zeta_{m}^h:=e^{2\pi\ri h/m}$, where $m\in\bN$ and $h\in\bN_0$.
In order to achieve this, we first need the following proposition
which are special cases of results by Katsurada
\cite[Theorem 5, Corollary 1.1, Corollary 1.3]{KMAs}.
\begin{proposition}\label{propm2}
  The following asymptotic formulas holds for $z\to 0$ in every
  fixed Stolz angle $|\arg (z)|\le \Delta$ $(0<\Delta<\pi/2)$.
\begin{enumerate}
  \item For any $h\in\bN_0$ and $m\in\bN$ such that $\gcd(h,m)=1$, we have
    $$(e^{-z}\zeta_{m}^h;e^{-z}\zeta_{m}^h)_\infty=
    A_0(h,m)z^{-1/2}\exp\left(-\frac{\pi^2}{6m^2z}\right)(1+O(z)),$$
where $A_0(h,m)$ is a constant. In particular, $A_0(1,1)=\sqrt{2\pi}$.
  \item For any $\alpha>0$ we have
    $$(-e^{-\alpha z};e^{-z})_\infty=
    2^{1/2-\alpha}\exp\left(\frac{\pi^2}{12z}\right)(1+O(z)).$$
  \item For any $\alpha, \mu\in(0,1)$,
\begin{equation*}
  (e^{-\alpha z}, e^{-(1-\alpha) z};e^{-z})_{\infty}=
  2\sin(\pi\alpha)\exp\left(-\frac{\pi^2}{3z}\right)(1+O(z)),
\end{equation*}
and
\begin{equation*}
  (e^{2\pi\ri \mu}e^{-\alpha z},e^{2\pi\ri (1-\mu)}e^{-(1-\alpha) z};e^{-z})_{\infty}
  =e^{-2\pi\ri (1/2-\alpha)(1/2-\mu)}\exp\left(-\frac{2\pi^2B_2(\mu)}{z}\right)
  (1+O(z)),
\end{equation*}
where $B_2(\mu)=\mu^2-\mu+1/6$.\footnote{Here we used the fact that
  $$\frac{1}{2}\left(\zeta_\mu(2)+\zeta_{1-\mu}(2)\right)=
  \sum_{n\ge 1}\frac{e^{2\pi\ri\mu n}+e^{2\pi\ri(1-\mu) n}}{2n^2}=
  \sum_{n\ge 1}\frac{\cos(2\pi n \mu)}{n^2}=\pi^2B_2(\mu),$$
  which can be found in \cite[Eq.~25.12.8]{NIST:DLMF}, where
  $\zeta_\mu(s):=\sum_{n\ge 1}\frac{e^{2\pi\ri\mu n}}{n^s}$
  is the periodic zeta function.}
\end{enumerate}
\end{proposition}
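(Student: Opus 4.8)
The statement to be proved is Proposition~\ref{propm2}, which collects
three asymptotic formulas attributed to Katsurada. Since the proposition is
explicitly stated to comprise \emph{special cases} of results in
\cite{KMAs}, the plan is not to reprove these expansions from scratch but
to show how each item is extracted from the cited theorems.

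\medskip

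First, I would recall the general framework in \cite{KMAs}: Katsurada
obtains complete asymptotic expansions, as $z\to 0$ in a Stolz angle, for
logarithms of generalized $q$-products of the shape
$\prod_{n\ge 0}(1-\omega e^{-(n+\alpha)z})$ (and reciprocals thereof),
with $\omega$ a root of unity and $\alpha$ a shift parameter; the leading
behavior is governed by a Mellin--Barnes integral whose main contribution
is the residue at $s=1$ of $\Gamma(s)\zeta(s)\,(\text{Lerch- or
periodic-zeta factor})\, z^{-s}$, producing the $\exp(\text{const}/z)$
factor, while the residue at $s=0$ produces the power of $z$ and the
constant prefactor. Concretely: for item (1), take $\omega=\zeta_m^h$ and
$\alpha=1$ after factoring $(e^{-z}\zeta_m^h;e^{-z}\zeta_m^h)_\infty$ so
that the argument and the base agree; the $\zeta(2)$-type main term yields
$-\pi^2/(6m^2 z)$ because the relevant periodic zeta value at the
primitive $m$-th root is $\zeta(2)/m^2$, and the residue at $s=0$ gives
$z^{-1/2}$ together with the constant $A_0(h,m)$; specializing $h=m=1$ to
the classical Dedekind eta transformation recovers $A_0(1,1)=\sqrt{2\pi}$.
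For item (2), one has $\omega=-1$, base $e^{-z}$, shift $\alpha$, and the
relevant Dirichlet series is the alternating zeta $\eta(s)=(1-2^{1-s})\zeta(s)$;
the pole at $s=1$ of $\Gamma(s)\eta(s)z^{-s}$ has residue $\tfrac12\log 2$
after accounting for $\eta(1)=\log 2$, but the $\exp(\pi^2/(12z))$ term
actually comes from the $s=2$ contribution $\Gamma(2)\eta(2)z^{-2}/\!\cdots$—
more precisely from $\eta(2)=\pi^2/12$—and the $s=0$ residue produces the
constant $2^{1/2-\alpha}$ via $\eta(0)=1/2$ and the $\alpha$-dependence of
the Lerch factor. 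For item (3), use the $_1\psi_1$-type two-factor product
with $\omega=1$ (resp.\ $\omega=e^{2\pi\ri\mu}$) and shifts $\alpha$,
$1-\alpha$; pairing the two factors makes the Lerch zeta combine into the
periodic zeta $\zeta_\mu(s)$, and the footnote's identity
$\tfrac12(\zeta_\mu(2)+\zeta_{1-\mu}(2))=\pi^2 B_2(\mu)$ is exactly the
$s=2$ residue computation that yields the exponent $-2\pi^2 B_2(\mu)/z$
(with $B_2(0)=1/6$ giving $-\pi^2/(3z)$ in the rational case), while the
$s=0$ term produces $2\sin(\pi\alpha)$ via the reflection formula,
respectively the phase $e^{-2\pi\ri(1/2-\alpha)(1/2-\mu)}$.

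\medskip

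The key steps, in order, would therefore be: (a) quote the precise
statements of \cite[Theorem~5, Corollary~1.1, Corollary~1.3]{KMAs} and
identify the parameter dictionary (root of unity $\omega$, modulus $m$,
shift $\alpha$, the attendant Lerch/periodic zeta function); (b) for each
of the three items, substitute the relevant parameters and read off the
$s=1$ (or $s=2$) residue to get the exponential factor, using the standard
special values $\zeta(2)=\pi^2/6$, $\eta(2)=\pi^2/12$, and the footnote
identity; (c) read off the $s=0$ residue to obtain the power of $z$ and the
constant prefactor, invoking $\zeta(0)=-\tfrac12$, $\eta(0)=\tfrac12$, the
functional equation/reflection formula for the $2\sin(\pi\alpha)$ and
$2^{1/2-\alpha}$ constants, and Katsurada's explicit formula for
$A_0(h,m)$; (d) note that all error terms in \cite{KMAs} are given as full
asymptotic series, so truncating after the leading term gives the stated
$(1+O(z))$; (e) verify the normalization $A_0(1,1)=\sqrt{2\pi}$ by
comparison with the Dedekind eta transformation
$\eta(\ri t)\sim (2\pi/ t)^{1/2}e^{-\pi t/6}\!\cdot\! t^{-?}$ as
$t\to\infty$, i.e.\ the $z\to 0$ limit of $(e^{-z};e^{-z})_\infty$.

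\medskip

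The main obstacle is purely bookkeeping: matching the normalization
conventions of \cite{KMAs} (which typically works with
$\log\prod(1-\omega e^{-(n+\alpha)z})$ and may absorb factors into its
definition of the Lerch transcendent) to the plain $q$-Pochhammer
normalization used here, and being careful about which residue ($s=1$ vs.\
$s=2$) supplies the dominant exponential in each case—item (2) and item (3)
have their leading exponential coming from the $s=2$ pole rather than the
$s=1$ pole, which is the only genuinely non-routine point. Once the
dictionary is fixed, each of the three formulas is an immediate
specialization, and the constant $A_0(h,m)$ is simply named rather than
evaluated (as in the statement), so no delicate computation of Gauss sums
or Barnes $G$-function values is required beyond the single check
$A_0(1,1)=\sqrt{2\pi}$.
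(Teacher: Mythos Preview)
Your approach matches the paper's exactly: the paper offers no proof of Proposition~\ref{propm2} beyond citing \cite[Theorem~5, Corollary~1.1, Corollary~1.3]{KMAs} and supplying the footnote identity $\tfrac12(\zeta_\mu(2)+\zeta_{1-\mu}(2))=\pi^2 B_2(\mu)$ for the exponent in item~(3). Your sketch of the Mellin--Barnes residue mechanism goes well beyond what the paper itself provides (and contains some minor bookkeeping slips, e.g.\ in the pole attribution for item~(2), that would need tightening if written out in full), but since the paper simply defers entirely to Katsurada, none of that detail is actually required.
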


The asymptotics of the infinite products in Lemma \ref{lemmm1} will follow from
Proposition~\ref{propm2}. It remains to establish the asymptotics for the
infinite sums in Lemma~\ref{lemmm1}. In the following, let ${\bf 1}_{event}$
denote the indicator function, let $h\in\bN_0$ and let $q=e^{-z/m}\zeta_m^h$.
Let $\psi(u)=\Gamma'(u)/\Gamma(u)$ be the digamma function given by
$$\psi(u)=-\gamma+\sum_{n\ge 1}\left(\frac{1}{n}-\frac{1}{n+u}\right).$$
We prove the following lemmas.
\begin{lemma}\label{lem33}As $z\to 0$ in every fixed Stolz angle
  $|\arg (z)|\le \Delta$ $(0<\Delta<\pi/2)$, we have
\begin{equation}\label{eqdds}
  \sum_{n\ge 1}q^{m\frac{n(n-1)}{2}+n a}=
  \frac{{\bf 1}_{m\mid ah}}{2}\left(\frac{2\pi}{z}\right)^{1/2}+O(1),
\end{equation}
\begin{equation}\label{eqpds}
  \sum_{n\ge 0}(-1)^{n}\frac{q^{m\frac{n(n+1)}{2}+mn+a}}
  {1-q^{mn+a}}=\frac{{\bf 1}_{m\mid ah}}{2z}
  \left(\psi\left(\frac{m+a}{2m}\right)-
    \psi\left(\frac{a}{2m}\right)\right)+O(|z|^{-1/2}),
\end{equation}
and
\begin{equation}\label{eqopds}
  \sum_{n\ge 1}\frac{q^{an}}{1+q^{mn}}=
  \frac{{\bf 1}_{m\mid ah}}{2z}\left(\psi\left(\frac{m+a}{2m}\right)-
    \psi\left(\frac{a}{2m}\right)\right)+O(1).
\end{equation}
\end{lemma}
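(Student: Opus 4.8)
The plan is to estimate each of the three sums by splitting $q=e^{-z/m}\zeta_m^h$ into its modulus and argument and applying a standard Mellin-transform / Euler--Maclaurin analysis. Write $w=z/m$, so that $q^j=e^{-jw}\zeta_m^{jh}$, and observe that the phase $\zeta_m^{jh}$ depends only on $j\bmod m$; the key structural fact driving the indicator ${\bf 1}_{m\mid ah}$ is that the dominant contribution survives only when the relevant residue of $jh$ modulo $m$ is constantly zero along the summation, i.e.\ when $m\mid ah$ (respectively $m\mid mh=0$ trivially for the $q^{mn}$ in the denominators). First I would record, for any fixed $c\in\bR$ with $0<c<1$ and any root of unity $\theta$, the elementary asymptotics
\[
\sum_{n\ge 1}e^{-cnw}\theta^{n}=\frac{1}{1-\theta e^{-cw}}-1=
\begin{cases}\dfrac{1}{cw}+O(1),&\theta=1,\\[2mm]O(1),&\theta\ne 1,\end{cases}
\]
as $w\to0$ in a Stolz angle, together with the Gaussian-sum estimate
\[
\sum_{n\ge1}e^{-\frac{m}{2}n(n-1)w}\theta^{n}=\frac{{\bf 1}_{\theta=1}}{2}\sqrt{\frac{2\pi}{mw}}+O(1),
\]
which follows from the classical theta asymptotics (or from a direct comparison with the Gaussian integral $\int_0^\infty e^{-\frac{m}{2}t^2 w}\,dt$, the $\theta\neq1$ case being $O(1)$ by a van-der-Corput / partial-summation argument). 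Substituting $w=z/m$ turns $\sqrt{2\pi/(mw)}$ into $\sqrt{2\pi/z}$, and $1/(cw)$ into $m/(cz)$; applying the first estimate with $\theta=\zeta_m^{ah}$ gives \eqref{eqdds} immediately, with the constant $1/2$ and the indicator ${\bf 1}_{m\mid ah}$ exactly as claimed.

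Next I would treat \eqref{eqpds} and \eqref{eqopds} together, since after expanding the geometric-type denominators they reduce to the same double sum. For \eqref{eqopds}, expand
\[
\sum_{n\ge1}\frac{q^{an}}{1+q^{mn}}=\sum_{n\ge1}\sum_{r\ge0}(-1)^r q^{an+mrn}
=\sum_{n\ge1}\frac{q^{an}}{1+q^{mn}},
\]
and regroup by the exponent $an+mrn=n(a+mr)$; since $\zeta_m^{h(a+mr)}=\zeta_m^{ha}$ is independent of $r$, the phase factors out and one is left with $\zeta_m^{ha}\sum_{r\ge0}(-1)^r\sum_{n\ge1}e^{-n(a+mr)w}\zeta_m^{han}$ — a convergent sum of geometric series. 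If $m\nmid ah$ then $\zeta_m^{ha}\ne1$ and every inner sum is $O(1)$ uniformly, giving $O(1)$ overall. If $m\mid ah$ the phase is $1$ and each inner sum is $\frac{1}{(a+mr)w}+O(1)=\frac{m}{(a+mr)z}+O(1)$; summing the alternating series $\sum_{r\ge0}\frac{(-1)^r}{a+mr}$ and using the standard identity
\[
\sum_{r\ge0}\frac{(-1)^r}{a+mr}=\frac{1}{2m}\Bigl(\psi\bigl(\tfrac{m+a}{2m}\bigr)-\psi\bigl(\tfrac{a}{2m}\bigr)\Bigr),
\]
(which is just the partial-fraction expansion of $\psi$ split into even and odd index), produces the stated main term $\frac{1}{2z}\bigl(\psi(\frac{m+a}{2m})-\psi(\frac{a}{2m})\bigr)$. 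The $O(1)$ tail from the ``$+O(1)$'' pieces needs a uniform bound so that it can be summed against the alternating signs; an integration-by-parts / Abel-summation step handles this and also gives the slightly weaker $O(|z|^{-1/2})$ error in \eqref{eqpds}, where the extra $\sum_{n\ge0}(-1)^n q^{mn(n+1)/2}$ factor contributes a $\Theta(|z|^{-1/2})$ fluctuation that I would bound exactly as in \eqref{eqdds} with the roles of the sums swapped. For \eqref{eqpds} specifically I would first write $\frac{q^{m n(n+1)/2+mn+a}}{1-q^{mn+a}}=\sum_{r\ge0}q^{(mn+a)(r+1)}q^{mn(n+1)/2}$, swap the order of summation, and recognise the inner sum over $n$ as a theta-type sum that is $O(|z|^{-1/2})$ unless the linear-in-$n$ phase vanishes, i.e.\ unless $m\mid ah$, in which case it matches the $r$-sum above after the identification $\sum_{r\ge0}(-1)^{?}\cdots$; care with the sign $(-1)^n$ and the shift is the only bookkeeping subtlety.

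The main obstacle I anticipate is not the leading-order extraction — that is routine Mellin/Euler--Maclaurin — but rather making the error terms \emph{uniform in the Stolz angle} and verifying that the off-diagonal phases ($\zeta_m^{ah}\ne 1$) genuinely contribute only $O(1)$ rather than something blowing up like $|z|^{-1/2}$. The delicate point is that $1/(1-\zeta_m^{ah}e^{-cw})$ is $O(1)$ only because $1-\zeta_m^{ah}$ is a nonzero constant, but one is summing infinitely many such terms (over $r$ in the geometric expansions, and over the quadratic $n$ in the theta sums), so one needs a bound that decays in the summation index. I would obtain this by pairing the Stolz-angle hypothesis $|\arg z|\le\Delta$ with the inequality $\re(cw)\ge c|w|\cos\Delta$, which gives geometric decay $e^{-cn|w|\cos\Delta}$ in the relevant variable and hence absolute convergence of all the error sums with a bound that is $O(1)$ (resp.\ $O(|z|^{-1/2})$) uniformly. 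Once that uniformity is in hand, the three displayed asymptotics follow by assembling the pieces, and Lemma~\ref{lem33} is proved.
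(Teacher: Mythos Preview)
Your treatment of \eqref{eqdds} and \eqref{eqopds} is a legitimate alternative to the paper's argument. The paper does not expand the denominators into geometric series; instead it applies Abel summation directly, using the partial sums $S(u)=\sum_{1\le n\le u}\zeta_m^{ahn}=u\cdot{\bf 1}_{m\mid ah}+O(1)$ to reduce both \eqref{eqdds} and \eqref{eqopds} to a single integral plus a bounded remainder. Your route---expand $1/(1+q^{mn})=\sum_{r\ge 0}(-1)^r q^{mrn}$, swap, and sum the resulting geometric series termwise---also works, and the digamma identity you quote is exactly the paper's \eqref{eqmmpsi1}. Two small slips: in your first displayed block you apply the \emph{Gaussian} estimate (not the ``first estimate'') to get \eqref{eqdds}; and ``every inner sum is $O(1)$ uniformly, giving $O(1)$ overall'' is not literally true---bounded alternating terms need bounded variation to sum to $O(1)$. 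You later acknowledge this, and an Abel-summation step of the kind you mention does close the gap.

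Your plan for \eqref{eqpds}, however, has a genuine error in the mechanism. After you expand $\dfrac{q^{mn(n+1)/2+mn+a}}{1-q^{mn+a}}=\sum_{r\ge 0}q^{mn(n+1)/2+(r+1)(mn+a)}$ and swap, the inner sum over $n$ is
\[
\sum_{n\ge 0}(-1)^n q^{mn(n+1)/2+mn(r+1)}=\sum_{n\ge 0}(-1)^n e^{-z[n(n+1)/2+n(r+1)]},
\]
because every exponent of $q$ here is a multiple of $m$ and $q^m=e^{-z}$ carries \emph{no} root-of-unity phase. So there is no ``linear-in-$n$ phase'' whose vanishing could be equivalent to $m\mid ah$; the condition $m\mid ah$ enters only through the outer factor $q^{a(r+1)}=e^{-a(r+1)z/m}\zeta_m^{ah(r+1)}$. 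Moreover, the $(-1)^n$-weighted partial theta sum above is $O(1)$ as $z\to 0$ (it tends to $1/2$), not $\Theta(|z|^{-1/2})$ as you assert. With these corrections your double sum becomes $\sum_{r\ge 0}\zeta_m^{ah(r+1)}e^{-a(r+1)z/m}J_{r+1}(z)$ with $J_s(z)=O(1)$; extracting the main term $\tfrac{1}{2z}\bigl(\psi(\tfrac{m+a}{2m})-\psi(\tfrac{a}{2m})\bigr)$ and controlling the remainder to $O(|z|^{-1/2})$ then requires a genuinely delicate second-order analysis of $J_s(z)$ that your sketch does not supply. The paper avoids all of this by a different decomposition: it pairs $n=2k$ with $n=2k+1$ to rewrite the sum as
\[
\sum_{k\ge 0}\!\left(\frac{(1-q^m)q^{mk(2k+1)-a}}{(q^{-(2mk+a)}-1)(q^{-(2km+m+a)}-1)}+\frac{(1-q^{m(2k+1)})q^{mk(2k+1)}}{q^{-(2mk+a)}-1}\right),
\]
shows the second piece is $O(|z|^{-1/2})$ directly, and extracts the main term from the first piece by truncating at $k\le |z|^{-1/2}$ and Taylor-expanding the denominators. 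That pairing, not a geometric expansion, is the key idea you are missing for \eqref{eqpds}.
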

\begin{proof}Let $z=x+\ri y$ with $x>0$ and $y\in\bR$, then we have
  $|y|\le x\cdot\tan (\Delta)$.  The proofs of \eqref{eqdds} and
  \eqref{eqopds} use similar arguments. In fact, note that
$$S(u):=\sum_{1\le n\le u}\zeta_{m}^{ahn}=u\cdot {\bf 1}_{m\mid ah}+O(1),$$
uniformly for all $u>0$. Hence, Abel's summation formula implies
\begin{align*}
  \sum_{n\ge 1}e^{-z\frac{n(n-1)}{2}-\frac{an}{m}z}\zeta_{m}^{ahn}
  &=S(u)e^{-\frac{u(u-1)}{2}z-\frac{a}{m}uz}\big|_{u=0}^{+\infty}-\int_{0}^{\infty}S(u)\left(e^{-\frac{u(u-1)}{2}z-\frac{a}{m}uz}\right)'\rd u\\
  &={\bf 1}_{m\mid ah}\cdot \int_{0}^{\infty}u
    \left(e^{-\frac{u(u-1)}{2}z-\frac{a}{m}uz}\right)'\rd u+
    O\left(\int_{0}^{\infty}
    \left|\left(e^{-\frac{u(u-1)}{2}z-\frac{a}{m}uz}\right)'\right|\rd u\right)\\
  &={\bf 1}_{m\mid ah}\cdot
    \int_{0}^{\infty}e^{-\frac{u(u-1)}{2}z-\frac{a}{m}uz}\rd u+
    O\left(|z|\int_{0}^{\infty}\left|u-\frac{1}{2}+
    \frac{a}{m}\right|e^{-\frac{u(u-1)}{2}x-\frac{a}{m}ux}\rd x\right)\\
  &={\bf 1}_{m\mid ah}\cdot \int_{\frac{a}{m}-\frac{1}{2}}^{\infty}
    e^{-\frac{u^2z}{2}+\frac{(a/m-1/2)^2}{2}z}\rd u+
    O\left(|z|\int_{\frac{a}{m}-\frac{1}{2}}^{\infty}|u|
    e^{-\frac{u^2x}{2}+\frac{(a/m-1/2)^2}{2}x}\rd u\right)\\
&=\frac{{\bf 1}_{m\mid ah}}{2}\left(\frac{2\pi}{z}\right)^{1/2}+O(1).
\end{align*}
Abel's summation formula further implies
\begin{align*}
  \sum_{n\ge 1}\frac{e^{-azn/m}}{1+e^{-nz}}e^{2\pi\ri ahn/m}
  &=S(u)\frac{e^{-azu/m}}{1+e^{-uz}}\bigg|_{u=0}^{+\infty}-
    \int_{0}^{\infty}S(u)\left(\frac{e^{-azu/m}}{1+e^{-uz}}\right)'\rd u\\
  &={\bf 1}_{m\mid ah}\cdot \int_{0}^{\infty}u
    \left(\frac{e^{-azu/m}}{1+e^{-uz}}\right)'\rd u+
    O\left(\int_{0}^{\infty}\left|\left(\frac{e^{-azu/m}}
    {1+e^{-uz}}\right)'\right|\rd u\right)\\
  &={\bf 1}_{m\mid ah}\cdot \int_{0}^{\infty}\frac{e^{-azu/m}}
    {1+e^{-uz}}\rd u+O\left(|z|\int_{0}^{\infty}
    \frac{e^{aux/m}+e^{-(1-a/m)ux}}{|e^{auz/m}+e^{-(1-a/m)uz}|^2}\rd x\right)\\
  &=\frac{{\bf 1}_{m\mid ah}}{z}\int_{0}^{\infty}
    \sum_{k\ge 0}(-1)^{k}e^{-(k+a/m)u}\rd u+
    O\left(1+\int_{1/|z|}^{\infty}\frac{|z|e^{-aux/m}\rd u}{(1-e^{-ux})^2}\right)\\
&=\frac{{\bf 1}_{m\mid ah}}{z}\sum_{k\ge 0}\frac{(-1)^{k}}{k+a/m}+O(1).
\end{align*}
By using \cite[Eq.~5.7.7]{NIST:DLMF}, we have
\begin{equation}\label{eqmmpsi1}
  \sum_{k\ge 0}\frac{(-1)^k}{k+a/m}=
  \frac{1}{2}\left(\psi\left(\frac{m+a}{2m}\right)-
    \psi\left(\frac{a}{2m}\right)\right),
\end{equation}
which completes the proof of \eqref{eqopds}.

It remains to give the proof of \eqref{eqpds}. Note that
\begin{equation*}
  \sum_{n\ge 0}(-1)^{n}\frac{q^{m\frac{n(n+1)}{2}+mn+a}}
  {1-q^{mn+a}}=\sum_{k\ge 0}\left(\frac{(1-q^m)q^{mk(2k+1)-a}}
    {(q^{-(2mk+a)}-1)(q^{-(2km+m+a)}-1)}+
    \frac{(1-q^{m(2k+1)})q^{mk(2k+1)}}{q^{-(2mk+a)}-1}\right).
\end{equation*}
We have
\begin{align*}
  \sum_{k\ge 0}\frac{(1-q^{m(2k+1)})q^{mk(2k+1)}}{q^{-(2mk+a)}-1}
  &\ll \sum_{k\ge 0}\frac{|1-e^{-(2k+1)z}|e^{-k^2x}}{e^{(2k+a/m)x}-1}\\
  &\ll \sum_{0\le k\le 1/|z|}\frac{(2k+1)|z|e^{-k^2x}}{(2k+a/m)x}+
    \sum_{k\ge 1/|z|}e^{-k^2x}\\
  &\ll \sum_{k\ge 0}e^{-k^2x}\ll x^{-1/2}\ll |z|^{-1/2},
\end{align*}
and
\begin{align*}
  \sum_{k\ge 0}\frac{(1-q^m)q^{mk(2k+1)-a}}{(q^{-(2mk+a)}-1)(q^{-(2km+m+a)}-1)}=
  &\sum_{0\le k\le |z|^{-1/2}}\frac{(1-e^{-z})e^{-z(k(2k+1)-a/m)}\zeta_m^{-ah}}
    {(e^{(2k+a/m)z}\zeta_{m}^{ah}-1)(e^{(2k+1+a/m)z}\zeta_{m}^{ah}-1)}\\
&+O\left(\sum_{k\ge |z|^{-1/2}}\frac{|z|e^{-k^2x}}{(2k+a/m)(2k+1+a/m)x^2}\right).
\end{align*}
For the $O$-term, we estimate that
$$\sum_{k\ge |z|^{-1/2}}\frac{|z|e^{-k^2x}}
{(2k+a/m)(2k+1+a/m)x^2}\ll \frac{1}{|z|}
\sum_{k\ge |z|^{-1/2}}\frac{1}{k^2}\ll |z|^{-1+1/2}\ll |z|^{-1/2}.$$
We now estimate the  main term. Since $0\le k\le |z|^{-1/2}$,
for $m\nmid ah$, we have
\begin{equation*}
  \sum_{0\le k\le |z|^{-1/2}}\frac{(1-e^{-z})e^{-z(k(2k+1)-a/m)}\zeta_m^{-ah}}
  {(e^{(2k+a/m)z}\zeta_{m}^{ah}-1)(e^{(2k+1+a/m)z}\zeta_{m}^{ah}-1)}\ll
  \sum_{0\le k\le |z|^{-1/2}}\frac{|z|}{\left||\zeta_{m}^{ah}-1|+
      O(z^{1/2})\right|^2}\ll |z|^{1/2}.
\end{equation*}
For $m\mid ah$, we have
\begin{align*}
  &\sum_{0\le k\le |z|^{-1/2}}\frac{(1-e^{-z})e^{-z(k(2k+1)-a/m)}\zeta_m^{-ah}}
    {(e^{(2k+a/m)z}\zeta_{m}^{ah}-1)(e^{(2k+1+a/m)z}\zeta_{m}^{ah}-1)}\\
  &=\sum_{0\le k\le |z|^{-1/2}}\frac{z}{(2k+a/m)(2k+1+a/m)z^2}
    (1+O((k+1)^2z))(1+O((k+1)z))\\
  &=\frac{1}{z}\sum_{0\le k\le |z|^{-1/2}}\frac{1}{(2k+a/m)(2k+1+a/m)}+
    O\left(\sum_{0\le k\le |z|^{-1/2}}1\right)=\frac{1}{z}\sum_{k\ge 0}
    \frac{(-1)^k}{k+a/m}+O(|z|^{-1/2}).
\end{align*}
Combining the above estimates we get
$$\sum_{n\ge 0}(-1)^{n}\frac{q^{m\frac{n(n+1)}{2}+mn+a}}{1-q^{mn+a}}=
\frac{{\bf 1}_{m\mid ah}}{2z}\left(\psi\left(\frac{m+a}{2m}\right)-
  \psi\left(\frac{a}{2m}\right)\right)+O(|z|^{-1/2}),$$
which completes the proof of the lemma.
\end{proof}

\begin{lemma}\label{lemm34} We have
\begin{equation*}
  G_{01}\!\left(a,m;e^{-z/m}\right)=
  \frac{1+O(|z|^{1/2})}{2}
  \frac{1}{\sqrt{2}}\exp\left(\frac{\pi^2m}{12z}\right),
\end{equation*}
\begin{equation*}
  G_{10}\!\left(a,m;e^{-z/m}\right)=(1+O(|z|^{1/2}))
  \frac{\psi\left(\frac{m+a}{2m}\right)-
    \psi\left(\frac{a}{2m}\right)}{2\pi\csc(a\pi/m)}
  \left(\frac{z}{2\pi m}\right)^{1/2}\exp\left(\frac{\pi^2m}{6z}\right),
\end{equation*}
and
\begin{equation*}
  G_{11}\!\left(a,m;e^{-z/m}\right)=(1+O(|z|^{1/2}))
  \frac{\psi\left(\frac{m+a}{2m}\right)-
    \psi\left(\frac{a}{2m}\right)}{2\pi\csc(a\pi/m)}
  \left(\frac{z}{4\pi m}\right)^{1/2}\exp\left(\frac{\pi^2m}{4z}\right)
\end{equation*}
when $z\to 0$ in every fixed Stolz angle $|\arg (z)|\le \Delta$
$(0<\Delta<\pi/2)$.
\end{lemma}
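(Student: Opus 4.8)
The plan is to combine the closed forms of $G_{01},G_{10},G_{11}$ from Lemma~\ref{lemmm1} --- each a ratio of $q$-Pochhammer symbols times one of the three infinite sums treated in Lemma~\ref{lem33} --- with the product asymptotics of Proposition~\ref{propm2} and the sum asymptotics of Lemma~\ref{lem33}, taken at $h=0$. Setting $q=e^{-z/m}$ one has $q^m=e^{-z}$, $q^{j}=e^{-jz/m}$ for every integer $j$, and $\zeta_m^{0}=1$, so in each of the three sums the indicator ${\bf 1}_{m\mid ah}$ with $h=0$ equals $1$. Dividing the additive error terms of Lemma~\ref{lem33} by the respective leading terms, I would first record
$$\sum_{n\ge 1}q^{mn(n-1)/2+na}=\frac12\left(\frac{2\pi}{z}\right)^{1/2}\bigl(1+O(|z|^{1/2})\bigr),\qquad \sum_{n\ge 1}\frac{q^{an}}{1+q^{mn}}=\frac{\psi\left(\frac{m+a}{2m}\right)-\psi\left(\frac{a}{2m}\right)}{2z}\bigl(1+O(|z|)\bigr),$$
and, likewise, $\sum_{n\ge0}(-1)^n q^{mn(n+1)/2+mn+a}/(1-q^{mn+a})=\frac{1}{2z}\bigl(\psi(\frac{m+a}{2m})-\psi(\frac{a}{2m})\bigr)\bigl(1+O(|z|^{1/2})\bigr)$.

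Next I would evaluate each Pochhammer factor occurring in Lemma~\ref{lemmm1}. Proposition~\ref{propm2}(1), applied with base $e^{-z}$ (so $h=0$, $A_0=\sqrt{2\pi}$) and then with $z$ rescaled by $1/m$, $2/m$ and $2$, yields
$$(q;q)_\infty\sim\sqrt{2\pi m/z}\;e^{-\pi^2 m/(6z)},\quad (q^m;q^m)_\infty\sim\sqrt{2\pi/z}\;e^{-\pi^2/(6z)},\quad (q^2;q^2)_\infty\sim\sqrt{\pi m/z}\;e^{-\pi^2 m/(12z)},\quad (q^{2m};q^{2m})_\infty\sim\sqrt{\pi/z}\;e^{-\pi^2/(12z)}.$$
Proposition~\ref{propm2}(2) gives $(-q^a,-q^{m-a};q^m)_\infty\sim 2^{1/2-a/m}\,2^{a/m-1/2}\,e^{\pi^2/(6z)}=e^{\pi^2/(6z)}$, the numerical constant collapsing to $1$, and Proposition~\ref{propm2}(3) gives $(q^a,q^{m-a};q^m)_\infty\sim 2\sin(a\pi/m)\,e^{-\pi^2/(3z)}$; every such estimate carries an overall factor $1+O(z)$, all of them being uniform in the fixed Stolz angle.

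The remainder is bookkeeping. Substituting these into the three formulas of Lemma~\ref{lemmm1}, the pure $\pi^2/z$ terms in the exponents --- those from the ``modulus-$q^m$'' factors and from the $\pm q^a$-factors --- cancel between numerator and denominator, and the surviving exponential is $\exp(\pi^2 m/(12z))$, $\exp(\pi^2 m/(6z))$, $\exp(\pi^2 m/(4z))$ for $G_{01},G_{10},G_{11}$ respectively. Collecting the powers of $z$ and the constants, and using $\sin(a\pi/m)=1/\csc(a\pi/m)$ together with $z^{1/2}/\sqrt{2\pi m}=(z/2\pi m)^{1/2}$ and $z^{1/2}/\sqrt{4\pi m}=(z/4\pi m)^{1/2}$, one recovers the prefactors $\tfrac12\cdot\tfrac1{\sqrt2}$, $\tfrac{\psi(\frac{m+a}{2m})-\psi(\frac{a}{2m})}{2\pi\csc(a\pi/m)}(z/2\pi m)^{1/2}$ and $\tfrac{\psi(\frac{m+a}{2m})-\psi(\frac{a}{2m})}{2\pi\csc(a\pi/m)}(z/4\pi m)^{1/2}$. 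Finally, a product of boundedly many factors of the form $1+O(|z|^{1/2})$ or better is again $1+O(|z|^{1/2})$, which gives the stated error. I do not expect a conceptual obstacle: the only delicate points are keeping the numerical constants straight, checking that the $\pi^2/z$ singularities of numerator and denominator really cancel, and observing that the worst relative error --- the $O(|z|^{1/2})$ coming from the sums in \eqref{eqdds} and \eqref{eqpds} --- is what controls the final error term.
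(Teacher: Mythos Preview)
Your proposal is correct and is exactly the approach the paper intends: the paper's own proof merely states that the lemma ``can be straightforwardly done using Proposition~\ref{propm2} and Lemma~\ref{lem33}; we omit the details,'' and what you have written is precisely the omitted computation --- specialize Lemma~\ref{lem33} to $h=0$, plug the product asymptotics of Proposition~\ref{propm2} into the three formulas of Lemma~\ref{lemmm1}, and collect constants, powers of $z$, and exponential factors. Your bookkeeping of the exponents and prefactors checks out in all three cases, and you are right that the dominant relative error $O(|z|^{1/2})$ comes from \eqref{eqdds} and \eqref{eqpds}.
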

\begin{proof}
  This proof of the lemma can be straightforwardly done using
  Proposition~\ref{propm2} and Lemma~\ref{lem33}; we omit the details.
\end{proof}
\begin{lemma}\label{proop}For $(x,y)\in S$, we have
\begin{equation*}
  G_{xy}\!\left(a,m; e^{-z/m}\zeta_{m}^h\right)=
  \left({\bf 1}_{m\mid h}+O(|z|^{1/2})\right)G_{xy}\!\left(a,m;e^{-z/m}\right),
\end{equation*}
when $z\rrw 0$ in every fixed Stolz angle $|\arg (z)|\le \Delta$
$(0<\Delta<\pi/2)$.
\end{lemma}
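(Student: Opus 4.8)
The plan is to read off the claim from the explicit formulas of Lemma~\ref{lemmm1}, by computing the asymptotics of $G_{xy}(a,m;e^{-z/m}\zeta_m^h)$ as $z\rrw0$ in a fixed Stolz angle and showing that, for $m\nmid h$, it is $O(|z|^{1/2})$ times the untwisted value $G_{xy}(a,m;e^{-z/m})$ (the case $m\mid h$ being trivial, since then $\zeta_m^h=1$). So throughout assume $m\nmid h$, and write $m_2=m/\gcd(h,m)\ge2$ and $m_3=m/\gcd(2h,m)$.

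First I would asymptotically evaluate each infinite product occurring in Lemma~\ref{lemmm1} at $q=e^{-z/m}\zeta_m^h$ using Proposition~\ref{propm2}. The key observations are that $q^m=e^{-z}$ and $q^{2m}=e^{-2z}$ are untwisted, so $(q^m;q^m)_\infty$ and $(q^{2m};q^{2m})_\infty$ follow from part (1) with modulus $1$; that $\zeta_m^h$ is a primitive $m_2$-th root of unity and $\zeta_m^{2h}$ a primitive $m_3$-th root, so $(q;q)_\infty$, $(q^2;q^2)_\infty$, and hence $(-q;q)_\infty=(q^2;q^2)_\infty/(q;q)_\infty$, follow from part (1) after rescaling $z$; and that, with $\theta=\{ha/m\}$ and $\mu=\{1/2+ha/m\}$, one has $(q^a,q^{m-a};q^m)_\infty=(e^{2\pi\ri\theta}e^{-(a/m)z},e^{2\pi\ri(1-\theta)}e^{-(1-a/m)z};e^{-z})_\infty$ and $(-q^a,-q^{m-a};q^m)_\infty=(e^{2\pi\ri\mu}e^{-(a/m)z},e^{2\pi\ri(1-\mu)}e^{-(1-a/m)z};e^{-z})_\infty$, both covered by part (3). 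Multiplying these out and inserting the asymptotics of the trailing sums from Lemma~\ref{lem33}, one gets, in every fixed Stolz angle,
\[
G_{xy}\!\left(a,m;e^{-z/m}\zeta_m^h\right)\sim C_{xy}(h)\,z^{\kappa_{xy}(h)}\exp\!\left(R_{xy}(h)/z\right),
\]
where $R_{xy}(h)$ is the sum of the exponential rates of the factors (the sums contributing $0$ to the rate) and $\kappa_{xy}(h)$ collects the powers of $z$. A direct inspection shows that every infinite-product factor contributes the same power of $z$ at $\zeta_m^h$ as at $1$; the only factor whose $z$-power can change is the trailing sum, whose rate in Lemma~\ref{lem33} carries the indicator ${\bf 1}_{m\mid ah}$.

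The core of the argument is then the inequality $R_{xy}(h)\le R_{xy}(0)$ for $m\nmid h$, together with the fact that equality forces $m\nmid ah$. Using the footnote identity $B_2(t)=(t-1/2)^2-1/12$ to simplify the rate contributions of the Pochhammer factors one finds, for instance, $R_{01}(h)=\pi^2\bigl(\varepsilon_2\,m/m_2^2+2(\mu-1/2)^2\bigr)$ with $\varepsilon_2=1/12$ or $-1/6$ according as $m_2$ is odd or even, whereas $R_{01}(0)=\pi^2 m/12$; since $m_2\ge2$ and the denominator of $\mu$ keeps $(\mu-1/2)^2$ bounded away from $1/4$ except when $m$ is small, a short case check gives $R_{01}(h)\le R_{01}(0)$, the equality cases all having $a=1$ and (one checks directly) $m\nmid ah$. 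The computations for $G_{10}$ and $G_{11}$ are of the same type, the one extra ingredient being that $\theta$ and $\mu$ differ by $1/2$ modulo $1$. One then concludes: if $R_{xy}(h)<R_{xy}(0)$, then $G_{xy}(a,m;e^{-z/m}\zeta_m^h)/G_{xy}(a,m;e^{-z/m})$ decays exponentially and is in particular $O(|z|^{1/2})$; and if $R_{xy}(h)=R_{xy}(0)$, then $m\nmid ah$, so by Lemma~\ref{lem33} the trailing sum at $\zeta_m^h$ is smaller than at $1$ by a factor $O(|z|^{1/2})$ while all remaining factors contribute equal powers of $z$ and equal rates, so the ratio is again $O(|z|^{1/2})$. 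Together with the trivial case $m\mid h$ this proves the lemma.

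I expect the main obstacle to be the bookkeeping in the last two paragraphs: simultaneously tracking all exponential rates and powers of $z$ across the three generating functions, and in particular verifying the finitely many borderline configurations where $R_{xy}(h)=R_{xy}(0)$ and confirming in each that $m\nmid ah$, so that Lemma~\ref{lem33} supplies the missing factor $O(|z|^{1/2})$. A minor technical point is that Proposition~\ref{propm2} must be applied to the bases $\zeta_m^h$ and $\zeta_m^{2h}$ only after identifying them with primitive roots of unity of the appropriate (possibly smaller) orders.
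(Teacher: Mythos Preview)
Your proposal is correct and follows essentially the same route as the paper: both compute the exponential rate of $G_{xy}(a,m;e^{-z/m}\zeta_m^h)$ from Proposition~\ref{propm2} and Lemma~\ref{lem33}, verify that this rate never exceeds the untwisted rate for $m\nmid h$, and then observe that in the borderline (equality) case one always has $m\nmid ah$, so that Lemma~\ref{lem33} supplies the missing factor $|z|^{1/2}$. The only organizational difference is that the paper packages the rate gap as an explicit function $A_{xy}(a,m;h)\ge 0$ and proves the contrapositive directly---showing via the crude bound $\gcd(m,h)\le m/2$ that $m\mid ah$ (with $m\nmid h$) forces $A_{xy}>0$---rather than first characterizing the equality cases and then checking $m\nmid ah$ in each, as you do.
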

\begin{proof}
  This proof of the lemma can be straightforwardly done using
  Proposition~\ref{propm2}, Lemmas~\ref{lem33} and \ref{lemm34}.
  We sketch some of the details. For $q=e^{-z/m}\zeta_{m}^h$ with
  $m\nmid h$, we have
\begin{align*}
  G_{01}\!\left(a,m;e^{-z/m}\zeta_{m}^h\right)
  &=\frac{(e^{-2z/m}\zeta_m^{2h};e^{-2z/m}\zeta_m^{2h})_\infty}
    {(e^{-z/m}\zeta_m^{h};e^{-z/m}\zeta_m^{h})_\infty}
    \frac{\sum_{n\ge 1}q^{m\frac{n(n-1)}{2}+an}}
    {(e^{-az/m}\zeta_{2m}^{m+2ah},e^{-(1-a/m)z}\zeta_{2m}^{-m-2ah},
    e^{-z};e^{-z})_\infty}\\
  &\ll\left(|z|^{1/2}+{\bf 1}_{m\mid ah}\right)\exp\left(\frac{\gcd(m,h)^2}
    {6mz/\pi^2}+\frac{\pi^2}{6z}+
    \frac{B_2\left(\left\{\frac{2ah+m}{2m}\right\}\right)}{z/(2\pi^2)}-
    \frac{\gcd(m,2h)^2}{12mz/\pi^2}\right)\\
  &\ll\left(|z|^{1/2}+{\bf 1}_{m\mid ah}\right)
    G_{01}\!\left(a,m;e^{-z/m}\right)\exp\left(-\frac{2\pi^2}{z}
    A_{01}(a,m; h)\right),
\end{align*}
where
$$A_{01}(a,m; h)=\frac{m}{24}+\frac{\gcd(m,2h)^2}{24m}-
\frac{\gcd(m,h)^2}{12m}-\frac{1}{12}-
B_2\left(\left\{\frac{2ah+m}{2m}\right\}\right).$$
Similarly, we have
\begin{align*}
  G_{10}\!\left(a,m;e^{-z/m}\zeta_{m}^h\right)
  &=\frac{(\zeta_{m}^{ah}e^{-az/m},\zeta_{m}^{-ah}e^{-(1-a/m)z};e^{-z})_\infty}
    {(e^{-z};e^{-z})_\infty^2( e^{-z/m}\zeta_m^{h};e^{-z/m}\zeta_m^h)_\infty}
    \sum_{k\ge 0}(-1)^{k}\frac{q^{m\frac{k(k+1)}{2}+mk+a}}{1-q^{mk+a}}\\
  &\ll \left(1+|z|^{-1/2}{\bf 1}_{m\mid ah}\right)
    |z|\exp\left(\frac{\pi^2}{3z}+\frac{\pi^2\gcd(m,h)^2}{6mz}-
    \frac{2\pi^2}{z}B_2\left(\left\{\frac{a h}{m}\right\}\right)\right)\\
  &\ll \left(|z|^{1/2}+{\bf 1}_{m\mid ah}\right)
    G_{10}\!\left(a,m;e^{-z/m}\right)\exp\left(-\frac{2\pi^2}{z}
    A_{10}(a,m; h)\right),
\end{align*}
where
$$A_{10}(a,m; h)=\frac{m}{12}+B_2\left(\left\{\frac{a h}{m}\right\}\right)-
\frac{1}{6}-\frac{\gcd(m,h)^2}{12m};$$
and
\begin{align*}
  G_{11}\!\left(a,m;e^{-z/m}\zeta_{m}^h\right)
  &=2\frac{(e^{-2z/m}\zeta_{m}^{2h};e^{-2z/m}\zeta_{m}^{2h})_\infty
    (e^{-2z};e^{-2z})_\infty^2}
    {(e^{-z/m}\zeta_{m}^{h};e^{-z/m}\zeta_{m}^{h})_\infty^2
    (e^{-z};e^{-z})_\infty^4}\\
  &\quad\times \frac{(e^{-az/m}\zeta_{m}^{ah},e^{-(1-a/m)z}
    \zeta_{m}^{-ah};e^{-z})_\infty}
    {(e^{-az/m}\zeta_{2m}^{m+2ah},e^{-(1-a/m)z}\zeta_{2m}^{-m-2ah};
    e^{-z})_\infty}\sum_{n\ge 1}\frac{q^{an}}{1+q^{mn}}\\
  &\ll \left(|z|+{\bf 1}_{m\mid ah}\right) |z|^{1/2}
    \exp\left(\frac{\pi^2\gcd(m,h)^2}{3mz}-
    \frac{\pi^2\gcd(m,2h)^2}{12mz}+\frac{\pi^2}{2z}\right)\\
  &\quad\times\exp\left(-\frac{2\pi^2}{z}
    \left(B_2\left(\left\{\frac{a h}{m}\right\}\right)-
    B_2\left(\left\{\frac{2ah+m}{2m}\right\}\right)\right)\right)\\
  &\ll \left(|z|+{\bf 1}_{m\mid ah}\right)
    G_{11}\!\left(a,m;e^{-z/m}\right)\exp\left(-\frac{2\pi^2}{z}
    A_{11}(a,m;h)\right),
\end{align*}
where
$$A_{11}(a,m;h)=\frac{m}{8}+\frac{\gcd(m,2h)^2}{24m}+
B_2\left(\left\{\frac{a h}{m}\right\}\right)-
\frac{1}{4}-\frac{\gcd(m,h)^2}{6m}-
B_2\left(\left\{\frac{2ah+m}{2m}\right\}\right).$$
It is clear that $A_{11}(a,m;h)=A_{01}(a,m;h)+A_{10}(a,m;h)$. Using
elementary arguments one can show that $A_{xy}(a,m;h)$ with $(x,y)\in S$,
is non-negative for all integers $m\nmid h$ and $m\ge 3$.  Moreover, for
$m\mid ah$ but $m\nmid h$, we have
$$A_{01}(a,m; h)=\frac{m}{24}+\frac{\gcd(m,2h)^2}{24m}-
\frac{\gcd(m,h)^2}{12m}\ge \frac{m}{24}-\frac{\gcd(m,h)^2}{24m}\ge
\left(1-\frac{1}{4}\right)\frac{m}{24}>0,$$
and
$$A_{10}(a,m; h)=\frac{m}{12}-\frac{\gcd(m,h)^2}{12m}\ge
\left(1-\frac{1}{4}\right)\frac{m}{12}>0,$$
because of $\gcd(m,2h)\ge \gcd(m,h)$ and $\gcd(m,h)\le m/2$. The combination
of all the above estimates proves the lemma.
\end{proof}

\subsection{Proof of Theorems \ref{th2} and \ref{propit}}\label{sec31}
In this subsection, we will provide the proof of Theorem~\ref{th2}.
To accomplish this, we will first establish Theorem~\ref{propit},
the residue class analogue of Ingham's Tauberian theorem.
A special case of Ingham's Tauberian theorem, as presented in
\cite[p.1082, Eqs.(21),(22)]{MR5522}, is stated as the following theorem.

\begin{theorem}\label{thitt}Let $A(u)$ be an increasing function on
  $[0, \infty)$ with $A(0)=0$ be defined as the following Laplace transform:
\begin{equation}\label{eqm400}
f(s)=\int_{0}^{\infty}e^{-us}\rd A(u).
\end{equation}
If $f(s)\sim f_0(s)$ when $s\rrw 0$ in every fixed Stolz angle
$|\arg(s)|\le \Delta$ $(0<\Delta<\pi/2)$, where
$$f_0(s)=C(M/s)^{v\beta-\frac{1}{2}}e^{\beta^{-1}(M/s)^{\beta}}\quad
(\beta, M, C>0, v\in\bR).$$
Then with $\alpha=\beta/(1+\beta)$ we have:
$$A(\omega)\sim \left(\frac{1-\alpha}{2\pi}\right)^{\frac{1}{2}}
C(M\omega)^{v\alpha-\frac{1}{2}}e^{\alpha^{-1}(M\omega)^{\alpha}},
\text{ as }\omega\rrw+\infty.$$
\end{theorem}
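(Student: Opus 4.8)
The plan is to recognise this as a special case of Ingham's Tauberian theorem and to prove it by the saddle-point method, the monotonicity of $A$ serving to make the Tauberian step rigorous. Concretely: (i) write $A$ in terms of $f$ by Laplace inversion; (ii) locate the saddle of the resulting integrand and read off the explicit asymptotic; and (iii) justify the passage from the transform asymptotics to the asymptotics of $A$ itself.

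For (i)--(ii): since $A$ is increasing with $A(0)=0$, and $A(\omega)\le e^{\omega s}f(s)$ for every $s>0$ forces $A(\omega)=\exp(o(\omega))$, integrating by parts in \eqref{eqm400} gives $\widehat A(s):=\int_0^\infty e^{-us}A(u)\,\rd u=f(s)/s$, so by the Laplace inversion theorem, for $c>0$,
$$A(\omega)=\frac{1}{2\pi\ri}\int_{c-\ri\infty}^{c+\ri\infty}\frac{f(s)}{s}\,e^{\omega s}\,\rd s$$
(at continuity points of $A$; the jump value is irrelevant asymptotically). To leading order the integrand is $\frac{C(M/s)^{v\beta-1/2}}{s}e^{\Phi(s)}$ with $\Phi(s)=\omega s+\beta^{-1}(M/s)^{\beta}$, and $\Phi'(s)=-M^\beta s^{-\beta-1}+\omega=0$ has the positive root $s_0=s_0(\omega)=M^\alpha\omega^{\alpha-1}$, at which, using $M/s_0=(M\omega)^{1-\alpha}$ and $\beta(1-\alpha)=\alpha$,
$$\Phi(s_0)=\alpha^{-1}(M\omega)^{\alpha},\qquad\Phi''(s_0)=(\beta+1)M^\beta s_0^{-\beta-2}>0,$$
and $s_0\to0$ as $\omega\to\infty$, so the saddle lies deep inside any fixed Stolz angle. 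Taking $c=s_0$ and expanding $\Phi(s_0+\ri t)=\Phi(s_0)-\tfrac12\Phi''(s_0)t^2+O(\Phi'''(s_0)t^3)$ on a window $|t|\le\eta$ of width a few times $1/\sqrt{\Phi''(s_0)}$ chosen small enough that $\Phi'''(s_0)\eta^3\to0$ (possible since $\beta>0$), the Gaussian integral $\int_{\bR}e^{-\frac12\Phi''(s_0)t^2}\rd t=\sqrt{2\pi/\Phi''(s_0)}$ gives the candidate main term
$$\frac{C(M/s_0)^{v\beta-1/2}}{2\pi s_0}\,e^{\Phi(s_0)}\sqrt{\frac{2\pi}{\Phi''(s_0)}}=C\Big(\frac{1-\alpha}{2\pi}\Big)^{1/2}(M\omega)^{v\alpha-1/2}e^{\alpha^{-1}(M\omega)^{\alpha}},$$
the last equality being the bookkeeping with $1/(\beta+1)=1-\alpha$ and $\beta/(\beta+1)=\alpha$ --- exactly the asserted formula.

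The only genuine obstacle is (iii): on the tail $|\im s|\gg s_0$ the contour leaves every fixed Stolz angle, where the hypothesis says nothing about $f(s)$, so the inversion integral cannot simply be estimated by its modulus there. Here I would follow Ingham's device, which exploits precisely the monotonicity of $A$ together with the elementary a priori bound $|f(\sigma+\ri\tau)|\le f(\sigma)$ (valid since $\rd A\ge0$): one works with a suitable regularisation of $A$, sandwiching $A(\omega)$ between two regularised averages over windows of width $o(1/s_0)$ whose common asymptotic is the main term above, the $\omega$-dependent choice of these widths rendering the contribution from outside the Stolz angle negligible against the main term. Carrying this out verbatim --- as in \cite[p.~1082]{MR5522}, with the explicit $f_0$ above playing the role of the comparison function --- establishes $A(\omega)\sim C(\tfrac{1-\alpha}{2\pi})^{1/2}(M\omega)^{v\alpha-1/2}e^{\alpha^{-1}(M\omega)^{\alpha}}$; in the present paper one may simply invoke this known result.
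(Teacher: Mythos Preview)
The paper does not prove Theorem~\ref{thitt} at all: it is stated as a direct quotation of Ingham's result \cite[p.~1082, Eqs.~(21),(22)]{MR5522} and then used as a black box in the proof of Theorem~\ref{propit}. Your final sentence already recognises this, so your proposal is consistent with the paper's treatment.

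The additional content you supply --- the saddle-point sketch in (i)--(ii) --- is correct and is essentially Ingham's own method. Your identification of the saddle $s_0=M^{\alpha}\omega^{\alpha-1}$, the value $\Phi(s_0)=\alpha^{-1}(M\omega)^{\alpha}$, and the bookkeeping producing the constant $\big(\tfrac{1-\alpha}{2\pi}\big)^{1/2}$ are all accurate. The one step you do not carry out is (iii), the Tauberian passage from the transform to $A$ itself, where the contour leaves every fixed Stolz angle; you correctly describe the standard remedy (smoothing $A$ over windows of width $o(1/s_0)$, sandwiching via monotonicity, and using $|f(\sigma+\ri\tau)|\le f(\sigma)$) and defer to \cite{MR5522} for the details. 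Since the paper itself simply cites the result, this is entirely adequate here.
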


Let $F(q)=\sum_{n\ge 0}c_nq^n$, be a power series whose coefficients $c_n$
are non-negative and increasing, and converges absolutely for $|q|<1$.
Notice that \eqref{eqm400} involves a Riemann--Stieltjes integral.
Therefore, if we define
$$A(u)=c_{\lfloor u\rfloor}-c_{0},$$
for any $u\ge 0$, then $A(u)$ is non-negative and increasing with $A(0)=0$.
Integration by parts for a Riemann--Stieltjes integral yields
\begin{equation}\label{eqm40}
  \int_{0}^{\infty}e^{-us}\rd A(u)=s\int_{0}^{\infty}A(u)e^{-us}\rd u=
  \sum_{k\ge 0}A(k)\int_{k}^{k+1}s e^{-us}\rd u=(1-e^{-s})F(e^{-s})-c_0.
\end{equation}
As a conclusion, if $F(e^{-s})$ meets the conditions in Theorem~\ref{thitt},
then we can use that theorem to derive an asymptotic formula for $c_n$.

However, in our case it is not easy to prove that $p_n(a,m-a,m;x,y)$
is increasing for $n\ge 0$. We only have
\begin{equation}\label{eqmmot}
p_{n+m}(a,m-a,m;x,y)\ge p_{n}(a,m-a,m;x,y),
\end{equation}
for all $n\ge 0$, due to Lemma~\ref{promt}. Luckily, we can prove
Theorem~\ref{propit}, a residue class analogue of Ingham's Tauberian
theorem, which works in our case.

\begin{proof}[Proof of Theorem~\ref{propit}]Due to condition~\eqref{c1}
  of Theorem~\ref{propit}, $c_{mn+j}$ is increasing in $n$,
  for any $0\le j<m$. Moreover, we have
  \begin{equation*}
    \sum_{n\ge 0}c_{mn+j}q^{n}=\frac{1}{m}\sum_{0\le h<m}e^{-2\pi\ri hj}q^{-j/m}
    f\!\left(q^{1/m}e^{2\pi\ri h/m}\right),
\end{equation*}
by using the orthogonality of the $m$-th roots of unity.
Using condition \eqref{c2} of Theorem \ref{propit} and the above,
with $A_{j}(u)=c_{m\lfloor u\rfloor+j}-c_j$, we have
$$c_j+\int_{0}^{\infty}e^{-uz}\rd A_{j}(u)=
(1-e^{-z})\sum_{n\ge 0}c_{mn+j}e^{-nz}\sim \frac{\alpha }
{m^{1+\gamma}} z^{1+\gamma}e^{m^\rho\beta z^{-\rho}/\rho},$$
by using \eqref{eqm40}, when $z\rrw 0$ in every fixed Stolz angle
$|\arg (z)|\le \Delta$, $(0<\Delta<\pi/2)$. Hence
Theorem~\ref{thitt} implies
$$c_{mn+j}\sim \frac{\alpha \beta^{\frac{1+2\gamma}{2(1+\rho)}}}
{\sqrt{2\pi(1+\rho)}}(mn)^{-\frac{1+2\gamma}{2(1+\rho)}-
  \frac{1}{2}}e^{(1+1/\rho)\beta^{1/(1+\rho)} (mn)^{\rho/(1+\rho)}},$$
as $n\rrw+\infty$. Therefore, by using the simple estimates
$$(n+r)^{\kappa_1}\sim n^{\kappa_1}\quad\text{and}\quad
(n+r)^{\kappa_2}=n^{\kappa_2}+O(n^{\kappa_2-1})=n^{\kappa_2}+o(1),$$
as $n\rrw \infty$, for any given $\kappa_1\in\bR$ and $\kappa_2<1$,
we can replace $mn$ by $nm+j$ in above asymptotic formula of $c_f(mn+j)$.
This completes the proof of Theorem~\ref{propit}.
\end{proof}

We now give the proof of Theorem~\ref{th2}. From inequality~\eqref{eqmmot},
Lemmas~\ref{lemm34} and \ref{proop}, we see that the generating function
$G_{xy}(a,m;q)$ with $(x,y)\in S$ meets the conditions of
Theorem~\ref{propit}. Thus we have
$$p_{n}(a,m-a,m; x,y)\sim c_{a,m}(x)\widehat{p}_n(x,y),$$
as $n\rrw +\infty$, with
$$\left(c_{a,m}(0), c_{a,m}(1)\right)=
\left(\frac{1}{2},~\frac{\psi\left(1/2+a/2m\right)-
    \psi\left({a}/{2m}\right)}{2\pi\csc(a\pi/m)}\right).$$
Here
$$\widehat{p}_n(0,1)=\frac{e^{\pi\sqrt{n/3}}}
{4\sqrt[4]{3}n^{3/4}},\;\; \widehat{p}_n(1,0)=
\frac{e^{2\pi\sqrt{n/6}}}{4\sqrt{3}n},\;\; \text{and}\;\; \widehat{p}_n(1,1)=
\frac{1}{8n}e^{\pi\sqrt{n}}.$$
Recall the unrestricted partition function $p(n)$
(see \cite[Eq.~(1.41)]{MR1575586}), the distinct partition function
$q(n)$ (see \cite[pages 109--110]{MR1575586}), and the overpartition
function $\bar{p}(n)$ (see \cite[Eq.~(1)]{MR5522}), which have the
following asymptotic formulas:
$$q(n)\sim \frac{e^{\pi\sqrt{n/3}}}
{4\sqrt[4]{3}n^{3/4}}, \;\; p(n)\sim \frac{e^{2\pi\sqrt{n/6}}}
{4\sqrt{3}n}, \;\; \text{and} \;\; \bar{p}(n)\sim \frac{1}{8n}e^{\pi\sqrt{n}},$$
as $n\to\infty$. This completes the proof of Theorem~\ref{th2}.

\section{Remarks on residue class biases in distinct partitions}\label{sec4}
Theorem~\ref{mth1} provides results for biases in residue classes in
partitions and overpartitions. For distinct partitions, we only have a
partial result, given in Theorem~\ref{mth2}. Beyond that partial result,
we provide a conjecture, proposed in the end of this section.

In the following, we write $d_n(a,b;m):=p_n(a,b,m; 0,1)$ for brevity. Then
$$\sum_{n\ge 0}d_n(a,b;m)q^n=
\sum_{\substack{\lambda\in \cD\\ \ell_{a,m}(\lambda)> \ell_{b,m}(\lambda)}}
q^{|\lambda|}.$$
If $b=m-a$, then Lemma~\ref{lemmm1} implies
\begin{equation}\label{eqpro31}
  \sum_{n\ge 0}d_n(a,m-a;m)q^{n}=\frac{(-q;q)_\infty}
  {(-q^a,-q^{m-a},q^m;q^m)_\infty}\sum_{n\ge 1}q^{m\binom{n}{2}+n a}.
\end{equation}
From Equation \eqref{eqpro31} we have the following inequalities
for $d_{n}(1,2;3)$ and $d_{n}(2,1;3)$.
\begin{proposition}We have
$$d_{3n+2}(1,2;3)\le d_{3n+2}(2,1;3),$$
$$d_{3n}(1,2;3)\ge d_{3n}(2,1;3), \quad\text{and}\quad
d_{3n+1}(1,2;3)\ge d_{3n+1}(2,1;3),$$
for all $n\ge 0$.
\end{proposition}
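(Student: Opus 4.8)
The strategy is to work directly with the generating-function identity \eqref{eqpro31} in the case $m=3$, splitting it into the three residue classes of the exponent modulo $3$. First I would write, using \eqref{eqm},
\[
\sum_{n\ge 0}\bigl(d_n(1,2;3)-d_n(2,1;3)\bigr)q^n
=\frac{(-q;q)_\infty}{(-q,-q^2,q^3;q^3)_\infty}
\Bigl(\sum_{n\ge 1}q^{3\binom{n}{2}+n}-\sum_{n\ge 1}q^{3\binom{n}{2}+2n}\Bigr),
\]
since interchanging $a\leftrightarrow m-a$ with $m=3$ swaps $a=1$ and $b=2$. The infinite product prefactor $P(q):=(-q;q)_\infty/(-q,-q^2,q^3;q^3)_\infty$ has non-negative coefficients (indeed it visibly simplifies, as $(-q;q)_\infty=(-q,-q^2,-q^3;q^3)_\infty$, to $(-q,-q^2;q^3)_\infty/((-q,-q^2;q^3)_\infty)\cdot$ corrections — in any case a product of terms $1/(1-q^{3k})$ and $(1+q^j)$, hence a power series in $q$ with non-negative integer coefficients). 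So the sign behaviour of $d_n(1,2;3)-d_n(2,1;3)$ is governed by the "sparse" theta-like difference $T(q):=\sum_{n\ge1}q^{3\binom{n}{2}+n}-\sum_{n\ge1}q^{3\binom{n}{2}+2n}$.

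The key observation is that the exponents $3\binom{n}{2}+n=\tfrac{3n^2-n}{2}$ and $3\binom{n}{2}+2n=\tfrac{3n^2+n}{2}$ are exactly the generalized pentagonal numbers, so that $1+T(q)=\sum_{n\in\bZ}(-1)^{?}q^{(3n^2-n)/2}$ without signs — that is, $1+T(q)=\sum_{n\in\bZ}q^{n(3n-1)/2}$, the even part in the Jacobi triple product $(q,q^2,q^3;q^3)_\infty=\sum_{n\in\bZ}(-1)^nq^{n(3n-1)/2}$ read without the sign $(-1)^n$. The residues mod $3$ of the pentagonal exponents $n(3n-1)/2$ are well understood: as $n$ ranges over $\bZ$, $n(3n-1)/2\bmod 3$ takes the value $0$ when $n\equiv 0\pmod 3$ (from $q^{(3n^2-n)/2}$ with $3\mid n$), etc. Concretely I would tabulate $3\binom{n}{2}+n\bmod 3$ and $3\binom{n}{2}+2n\bmod 3$: the first is $\equiv n\pmod 3$ and the second is $\equiv 2n\equiv -n\pmod 3$. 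Hence the class $\equiv 2\pmod 3$ of $T(q)$ receives $-q^{3\binom{n}{2}+2n}$ from $n\equiv 1\pmod 3$ and $+q^{3\binom{n}{2}+n}$ from $n\equiv 2\pmod 3$; matching up these two subsequences term by term (both have leading exponent $\ge 5$, and one checks the $n\equiv1$ subsequence dominates) shows the degree-$(3n+2)$ part of $T(q)$ has all coefficients $\le 0$, while the degree-$(3n)$ and degree-$(3n+1)$ parts have all coefficients $\ge 0$. Multiplying by $P(q)$ (non-negative coefficients) and noting that multiplication by a non-negative power series preserves each of these sign patterns once we track which residue class of $P$ multiplies which residue class of $T$ — here every coefficient of $P$ in class $0\pmod 3$ times a coefficient of $T$ in class $r$ lands in class $r$, and the cross terms need the full residue-decomposition of $P$ — gives the claimed inequalities.

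**Main obstacle.** The delicate point is the last sentence: $P(q)$ is supported on \emph{all} residue classes mod $3$, so one cannot simply say "non-negative times non-negative". The honest route is to write $P(q)=P_0(q^3)+qP_1(q^3)+q^2P_2(q^3)$ and $T(q)=qT_1(q^3)+q^2T_2(q^3)$ (note $T$ has no class-$0$ part, since the pentagonal identity's constant-type term sits at $n=0,1$ giving exponents $1,2$), and then expand the product class by class; the coefficient of $q^{3n}$ involves $P_2T_1$ and $P_1T_2$, etc. So I must prove the stronger fact that $T_1(q)$ and $T_2(q)$ \emph{themselves} have non-negative coefficients and $-T_2$ pattern etc., i.e. that each residue subsequence of $T$ is sign-definite, \emph{and} that the particular combinations appearing are individually of the right sign. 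I expect this reduces to a clean statement: the multiset of exponents $\{3\binom{n}{2}+n:n\ge1\}$ in a fixed class mod $3$ is a subset of (or interleaves with) the corresponding multiset for $3\binom{n}{2}+2n$, with a matching that respects the class; verifying the three matchings explicitly (using that consecutive pentagonal-type exponents differ by $\approx 3n$) is the real content, and is a finite-to-check-then-induct argument rather than anything deep. Once that combinatorial matching is in hand, the proposition follows by collecting coefficients of $q^{3n}$, $q^{3n+1}$, $q^{3n+2}$ in $P(q)T(q)$.
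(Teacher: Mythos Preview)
There is a genuine gap in your plan, stemming from two related errors.

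First, you nearly complete but then abandon the crucial simplification of the prefactor. You correctly observe that $(-q;q)_\infty=(-q,-q^2,-q^3;q^3)_\infty$; carrying this one step further gives
\[
P(q)=\frac{(-q;q)_\infty}{(-q,-q^2,q^3;q^3)_\infty}
=\frac{(-q,-q^2,-q^3;q^3)_\infty}{(-q,-q^2,q^3;q^3)_\infty}
=\frac{(-q^3;q^3)_\infty}{(q^3;q^3)_\infty},
\]
a power series in $q^3$ alone with non-negative coefficients. Thus your ``main obstacle'' --- that $P$ is supported on all residue classes --- is illusory: in fact $P_1=P_2=0$ in your notation, and the residue class modulo $3$ of every term in $P(q)T(q)$ is determined entirely by $T$. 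This simplification is the paper's first move.

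Second, even granting this, your intermediate claim that each residue class of $T(q)=\sum_{n\ge1}q^{n(3n-1)/2}(1-q^n)$ is sign-definite is \emph{false}. For instance, the class $\equiv 1\pmod 3$ is
\[
q-q^7+q^{22}-q^{40}+q^{70}-q^{100}+\cdots,
\]
with strictly alternating signs (and similarly for classes $0$ and $2$). So the ``combinatorial matching'' you describe cannot establish sign-definiteness of $T$ alone; that statement is simply not true.

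What actually works is that each residue class of $T(q)$ groups naturally into consecutive pairs of the form $\pm q^{A_k}(1-q^{B_k})$ with every $B_k$ divisible by $3$ and with a \emph{consistent} outer sign within each class (positive for classes $0$ and $1$, negative for class $2$). For example, the class-$1$ part is $\sum_{k\ge0}q^{(3k+1)(9k+2)/2}\bigl(1-q^{12k+6}\bigr)$. The final ingredient is that
\[
P(q)\,(1-q^{3\ell})
=\frac{(-q^3;q^3)_\infty}{(q^6;q^3)_\infty}\,(1+q^3+\cdots+q^{3(\ell-1)})
\]
has non-negative coefficients for every $\ell\ge1$, so after multiplication by $P$ each paired term contributes with the correct sign. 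This is precisely the dissection the paper carries out; your plan was missing both the simplification of $P$ and this absorption of the factors $(1-q^{B_k})$.
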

\begin{proof}
  The proof is straightforward and uses a dissection strategy.
  First, we observe
\begin{equation*}
  \sum_{n\ge 0}(d_{n}(1,2;3)-d_{n}(2,1;3))q^n=
  \frac{(-q^3;q^3)_\infty}{(q^3;q^3)_\infty}
  \sum_{n\ge 1}q^{\frac{n(3n-1)}{2}}(1-q^{n}).
\end{equation*}
Dissection of the last sum gives
\begin{align*}
  \sum_{n\ge 0}q^{\frac{n(3n-1)}{2}}(1-q^{n})
  &=\sum_{0\le j\le 2}\sum_{n\ge 0}q^{\frac{3(3n+j)-3n+j)}{2}}(1-q^{3n+j})\\
  &=\sum_{n\ge 0}q^{\frac{3n(9n-1)}{2}}(1-q^{3n})+
    q\sum_{n\ge 0}q^{\frac{9n(3n+5)}{2}}(1-q^{12n+6})-
    q^2\sum_{n\ge 0}q^{\frac{9n(3n+7)}{2}}(1-q^{6n+3}).
\end{align*}
This completes the proof.
\end{proof}
A similar argument can be applied to compare $d_{n}(1,3;4)$ and $d_{n}(3,1;4)$:
\begin{align*}
  \sum_{n\ge 0}\left(d_n(1,3;4)-d_n(3,1;4)\right)q^{n}
  &=\frac{(-q;q)_\infty}{(-q,-q^{3},q^4;q^4)_\infty}
    \sum_{n\ge 1}q^{2n^2-n}(1-q^{n})\\
  &=\frac{1}{(q^2;q^2)_\infty}\sum_{n\ge 1}q^{2n^2-n}(1-q^{2n})
    =\frac{1}{(q^4;q^2)_\infty}\sum_{n\ge 1}q^{2n^2}\sum_{0\le j<n}q^{2j-n}.
\end{align*}
Therefore, $d_n(1,3;4)\ge d_n(3,1;4)$.
This is also a special case of Theorem~\ref{mth2}. 

We turn to the general case.
For any $1\le a<b\le m$, by Equation~\eqref{eqmgp} we have
\begin{equation*}
  \sum_{n\ge 0}d_{n}(a,b;m)q^n=
  \frac{(-q;q)_\infty}{(-q^a,-q^b;q^m)_\infty}
  \sum_{\substack{j> k\\ j,k\ge 0}}\frac{q^{\frac{1}{2}m(j(j-1)+k(k-1))+aj+bk}}
  {(q^m;q^m)_{j}(q^m;q^m)_{k}}.
\end{equation*}
Based on this generating function and extensive computer experiments,
we propose the following conjecture.
\begin{conjecture}\label{conj1}For any positive integers $a,b,m$ such
  that $1\le a<b\le m$, $m\ge 3$ and $(a,b,m)\neq (1,2,3)$, there exists
  a constant $N_{a,b,m}>0$ such that
$$d_{n}(a,b; m)\ge d_{n}(b,a; m),\quad\text{for all}\quad n\ge N_{a,b, m}.$$
Moreover,
$$N_{a,m-a,m}=0,\quad\text{for all}\quad 1\le a<m/2,$$
except
$$N_{2,3,5}=45, N_{2,4,6}=5, N_{3,4,7}=8, N_{4,5,9}=9.$$
\end{conjecture}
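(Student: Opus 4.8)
The plan is to split Conjecture~\ref{conj1} into its two halves: the \emph{qualitative} statement that $d_n(a,b;m)\ge d_n(b,a;m)$ for all sufficiently large $n$, and the \emph{quantitative} statement that in the symmetric case one may take $N_{a,m-a,m}=0$ apart from the four listed exceptions (and $(a,b,m)=(1,2,3)$). The first half is an asymptotic problem; the second should be attacked through the much simpler generating function \eqref{eqpro31}.

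For the qualitative half, set $D_{a,b,m}(q)=\sum_{n\ge0}\bigl(d_n(a,b;m)-d_n(b,a;m)\bigr)q^n$. Using $(-q;q)_\infty/(-q^a,-q^b;q^m)_\infty=\prod_{j\ge1,\ j\not\equiv a,b\,(m)}(1+q^j)$ in the generating function of Section~\ref{sec4}, one obtains
\begin{equation*}
  D_{a,b,m}(q)=\Biggl(\prod_{\substack{j\ge1\\ j\not\equiv a,b\ (m)}}(1+q^j)\Biggr)
  \sum_{j>k\ge0}\frac{q^{\frac{m}{2}(j(j-1)+k(k-1))+a(j+k)}\bigl(1-q^{(b-a)(j-k)}\bigr)q^{(b-a)k}}{(q^m;q^m)_j(q^m;q^m)_k}.
\end{equation*}
The first factor is a non-negative series and, crucially, every summand of the second factor is strictly positive for $0<q<1$. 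I would therefore determine the behaviour of $D_{a,b,m}(e^{-z})$ as $z\to0$ in a Stolz angle by a two-variable saddle-point analysis of the inner double sum (after truncating its tails), matched with the infinite-product asymptotics of Proposition~\ref{propm2}; positivity of the summands should force the leading term to be a positive constant times a factor of shape $z^{\gamma}\exp(\beta z^{-\rho}/\rho)$. Next, exactly as in Lemma~\ref{proop}, one checks that $D_{a,b,m}$ is subdominant at the nontrivial $m$-th roots of unity. Feeding this into Theorem~\ref{propit} — after verifying the monotonicity hypothesis $c_{n+m}\ge c_n$ for $c_n=d_n(a,b;m)-d_n(b,a;m)$, which itself needs a short combinatorial or $q$-series argument, or else working instead with the two-term asymptotics of $d_n(a,b;m)$ and $d_n(b,a;m)$ separately — yields $d_n(a,b;m)-d_n(b,a;m)\sim(\text{positive})\cdot(\text{main term})$, hence eventual positivity. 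To extract a concrete $N_{a,b,m}$ one must replace the soft Tauberian step by an \emph{effective} circle-method / Meinardus-type estimate with error terms uniform in $a,b,m$, and finish with a finite computer check on the remaining range $0\le n<N_{a,b,m}$.

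For the sharp symmetric claim, start from \eqref{eqpro31}, which gives
\begin{equation*}
  \sum_{n\ge0}\bigl(d_n(a,m-a;m)-d_n(m-a,a;m)\bigr)q^n
  =\frac{(-q;q)_\infty}{(-q^a,-q^{m-a},q^m;q^m)_\infty}\sum_{n\ge1}q^{m\binom{n}{2}+na}\bigl(1-q^{n(m-2a)}\bigr),
\end{equation*}
where the prefactor $\prod_{j\not\equiv\pm a\,(m)}(1+q^j)\cdot(q^m;q^m)_\infty^{-1}$ already has non-negative coefficients, so the entire difficulty is that we are multiplying it by the \emph{alternating} theta-type series $\Theta_{a,m}(q):=\sum_{n\ge1}q^{m\binom n2+na}(1-q^{n(m-2a)})$. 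I would look for a manifestly non-negative rearrangement by pairing each positive contribution $q^{m\binom n2+na}$ with a carefully chosen negative tail and absorbing it into a suitable partial factor of $(q^m;q^m)_\infty^{-1}$, so that every group becomes a series with non-negative coefficients — precisely the dissection manoeuvre carried out for $(a,m)\in\{(1,3),(1,4)\}$ in Section~\ref{sec4}, and in the same spirit as the Andrews/Binner arguments behind Theorems~\ref{maino} and~\ref{pro1}. One expects such a pairing to succeed once $a$ and $m$ are not too small, the exceptional pairs $(2,5),(2,6),(3,7),(4,9)$ (together with $(a,m)=(1,3)$, for which, by the Proposition of Section~\ref{sec4}, no threshold exists at all) marking exactly where it degenerates; these finitely many cases can then be isolated and verified directly. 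Alternatively, a refined asymptotic route is available here: since $d_n(a,m-a;m)\sim\tfrac12q(n)$ by Theorem~\ref{th2}, one has $d_n(a,m-a;m)-d_n(m-a,a;m)=2d_n(a,m-a;m)-q(n)+T_n$, where $T_n$ counts distinct partitions of $n$ with equally many parts in the two classes; since $T_n>0$ always, a second-order asymptotic comparison showing $T_n$ dominates the error $2d_n(a,m-a;m)-q(n)$ would again give eventual positivity — but not the exact threshold $0$.

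The main obstacle is the sharp symmetric statement. Because $\Theta_{a,m}(q)$ genuinely has negative coefficients while the prefactor has non-negative ones, term-by-term positivity is lost, so one needs an honest combinatorial injection (or an exact $q$-series identity) valid \emph{uniformly} in $a$ and $m$, together with a clean description of exactly where it breaks down — producing such a uniform argument rather than a family of ad hoc dissections is what makes the conjecture hard, and pinning down precisely the four exceptional pairs requires both structural insight and careful bookkeeping. Even the qualitative half is technically demanding: extracting explicit thresholds $N_{a,b,m}$ needs effective asymptotics with error constants uniform in the three parameters, which the Tauberian machinery of Section~\ref{sec3} supplies only in non-effective form.
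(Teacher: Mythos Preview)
The statement you are addressing is presented in the paper as an open \emph{Conjecture}: the authors say it is proposed ``based on this generating function and extensive computer experiments,'' and the only partial confirmation they record is the class of special cases covered by Theorem~\ref{mth2}. There is no proof in the paper to compare your attempt against.

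Your proposal is a research outline rather than a proof, and you yourself flag its gaps. For the qualitative half, Theorem~\ref{propit} requires a sequence of \emph{non-negative} coefficients with $c_{n+m}\ge c_n$; the difference $d_n(a,b;m)-d_n(b,a;m)$ is not known to be non-negative (the very conjecture allows it to fail for small $n$), so the Tauberian theorem does not apply to $D_{a,b,m}$ as written, and Lemma~\ref{promt} supplies the $m$-step monotonicity only for $p_n(a,b,m;x,y)$, not for the difference. Your fallback --- separate second-order asymptotics for $d_n(a,b;m)$ and $d_n(b,a;m)$ whose main terms cancel while the subleading term has a definite sign --- is plausible in spirit but is not carried out, and it is exactly the delicate step. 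For the sharp symmetric claim $N_{a,m-a,m}=0$, you reduce to controlling the sign of the product of a non-negative prefactor with the alternating series $\Theta_{a,m}(q)$ and then propose to ``look for a manifestly non-negative rearrangement'' valid uniformly in $a,m$; but that uniform dissection is precisely the open content of the conjecture, as you concede in your final paragraph. The four exceptional thresholds $45,5,8,9$ are numerical observations from the paper's experiments, not consequences of any argument you supply.
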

\begin{remark}The constants $N_{a,b, m}$ appear to be all
  relatively small. In particular, there exists an $m_0>9$
  such that $N_{a,b, m}$ equals zero for all $m\ge m_0$.
\end{remark}
\begin{remark}
  We note that we confirmed Conjecture~\ref{conj1} for a class
  of special cases in Theorem~\ref{mth2}.
\end{remark}

\section*{Statements and Declarations}
The authors declare no conflicts of interest.
Both authors contributed to the preparation, writing, reviewing and
editing of the manuscript.
Data sharing not applicable to this article.


\end{document}